\newtheorem{theorem}{Theorem}[section]
\newtheorem*{theorem*}{Theorem}
\newtheorem{corollary}[theorem]{Corollary}
\newtheorem{conjecture}[theorem]{Conjecture}
\newtheorem{lemma}[theorem]{Lemma}
\newtheorem{proposition}[theorem]{Proposition}
\theoremstyle{definition}{
\newtheorem{definition}[theorem]{Definition}
\newtheorem*{definition*}{Definition}

\newtheorem*{example*}{Example}

\newtheorem{notation}[theorem]{Notation}
}
\numberwithin{equation}{section}
\newcommand{\Imm}{\operatorname{Im}}
\newcommand{\gl}{\operatorname{GL}_2^+(\mathbb{Q})}
\newcommand{\slz}{\operatorname{SL}_2(\mathbb{Z})}
\newcommand{\uh}{\mathbb{H}}
\newcommand{\suq}{\subseteq}
\begin{document}
\title{Effective Andr\'e-Oort Type Results for Almost-Holomorphic Modular Functions}
\author{Haden Spence}

\maketitle

\begin{abstract}
	In this short paper we discuss a number of effective and/or explicit results of Andr\'e-Oort type for the nonholomorphic function $\chi^*$, which I have discussed in a number of other papers such as \cite{Spence2017Ext} and \cite{Spence2016}.  After working in a rather ad-hoc manner to get some good estimates on the tails of the $q$-expansions involved, we prove weak effective Andr\'e-Oort results for $\chi^*$, which mimic but are not full analogues of effective Andr\'e-Oort results known due to K\"uhne \cite{Kuehne2012}/Bilu-Masser-Zannier \cite{Bilu2013} for the classical modular function $j$.  
	
	Then we go on to discuss what we call an ``explicit'' result; that certain triples of special points cannot often be collinear, looking for an analogue of \cite{Bilu2017}.  Again we cannot get a perfect analogy, but we do prove a weaker result and discuss what remains to be proved to complete this.
	
	An important result which arises as a side-effect of the explicit calculation done here is Corollary \ref{cor:FieldEquality}, which affirms a conjecture I made in earlier papers (particularly \cite{Spence2017Ext}); that for a quadratic point $\tau$ we have $\mathbb{Q}(j(\tau))=\mathbb{Q}(\chi^*(\tau))$.  Although it appears here somewhat tangentially, it may be the most significant result in the paper.	
\end{abstract}
\bigskip

\textbf{Acknowledgements.}  As ever I would like to thank my supervisor Jonathan Pila, who has guided me with his characteristic good humour and certain judgement throughout my DPhil studies.  This short paper in particular also owes a lot to Gareth Jones, who encouraged me to pursue this and with whom I had a number of very fruitful conversations on these topics.  Since I have now left Oxford, it is also a good time to thank Oxford University and my friends and colleagues at the Mathematical Institute there for their help and friendship.  Much of this work was carried out while I was funded by an EPSRC grant; I thank the EPSRC again for their generosity throughout my DPhil.

\bigskip

\textbf{Disclaimer.} In some ways this paper may be somewhat incomplete as a result of the author's being out of academic circulation.  I welcome any comments and suggestions anyone may have on how it might be improved - though such changes may be slow in coming!

\section{Introduction}\label{sect:intro}
In work carried out during the course of my DPhil studies at the University of Oxford, I proved various results of Andr\'e-Oort type in the context of ``nonclassical modular functions''.  This last phrase, of course, is rather general and could apply to any number of functions.  The functions that seem most appropriate for the purpose, however, appear to be the \emph{quasimodular} and \emph{almost holomorphic modular} functions.

These two classes of function follow a pattern that arises often among classes of `not-quite-modular' functions.  One begins with some class of holomorphic functions which fail to be quite invariant under the action of the modular group (in this case the quasimodular functions), then by applying some sort of correction one produces a nonholomorphic function which is fully invariant under $\slz$. 

Quasimodular functions arise from the derivatives of modular forms.  This is a well-known and fairly obvious construction: when differentiating the modular law for a modular form $f$,
\[f(\gamma\tau) = (c\tau+d)^kf(\tau),\]
one gets
\[f'(\gamma\tau)(c\tau+d)^{-2} = (c\tau+d)^kf'(\tau) + ck(c\tau+d)^{k-1}f(\tau)\]
and hence
\begin{equation}\label{eqn:qmForm}f'(\gamma\tau)(c\tau+d)^{-k-2} = f'(\tau) + k\frac{c}{c\tau+d}f(\tau),\end{equation}
so that $f'$ is \emph{nearly} a modular form of weight $k+2$.

This is essentially the definition of a quasimodular form: a holomorphic function which transforms under $\slz$ like a modular form, only with an error term which is a polynomial in $\frac{c}{c\tau+d}$ with holomorphic functions as coefficients.

In this document we tend to be more concerned with the dual almost holomorphic modular functions.  Using (\ref{eqn:qmForm}) and the fact that $\Imm\gamma\tau = \Imm\tau|c\tau+d|^{-2}$, one sees that
\begin{align*}(c\tau+d)^{-k-2}\left[f'(\gamma\tau) - \frac{ik}{2}\frac{f(\gamma\tau)}{\Imm\gamma\tau}\right]&=f'(\tau) + k\frac{c}{c\tau+d}f(\tau)-\frac{ik}{2}\frac{f(\tau)}{\Imm\tau}\frac{|c\tau+d|^2}{(c\tau+d)^2}\\
&=f'(\tau)-\frac{ik}{2}\frac{f(\tau)}{(c\tau+d)\Imm\tau}\left(2ci\Imm\tau+(c\overline{\tau}+d)\right)\\
&=f'(\tau)-\frac{ik}{2}\frac{f(\tau)}{\Imm\tau}.\end{align*}
So the function
\[\widehat{f} = f' - \frac{ikf}{2\Imm},\]
the almost holomorphic dual of $f'$, transforms like a modular form of weight $k+2$.   In fact any quasimodular form can be corrected in such a way, and corrected functions of this type are called almost holomorphic modular forms.
\begin{definition}
  A function $f:\uh\to\mathbb{C}$ is called almost holomorphic if there are holomorphic functions $f_k:\uh\to\mathbb{C}$, bounded as $\Imm\tau\to\infty$, such that
  \[f(\tau)=\sum_{r=0}^{n}f_r(\tau)(\Imm\tau)^{-r}.\]
  Such a function is an almost holomorphic modular form if there is an integer $k$ such that
  \[f(\gamma\tau)=(c\tau+d)^kf(\tau)\]
  for all\footnote{Throughout this paper we ignore level structure, dealing always with the full modular group $\slz$.} $\gamma=\begin{pmatrix}a&b\\c&d\end{pmatrix}\in\slz$ and all $\tau\in\uh$.
  
  An almost holomorphic modular function is a quotient of two almost holomorphic modular forms of equal weight.
\end{definition}

Like most modular objects, quasimodular functions have $q$-expansions, that is expressions of the form
\[f(\tau)=\sum_{n=-N}^{\infty} c_nq^n,\]
where $q=e^{2\pi i\tau}$ and $c_n\in\mathbb{C}$.  This is not quite true of almost holomorphic modular functions, of course, but they can be represented instead by polynomials with $q$-expansions as coefficients; elements of $\mathbb{C}((q))[(\Imm\tau)^{-1}]$.  Though these are not strictly $q$-expansions in the traditional sense, we will refer to them as such for the remainder of the document.

\bigskip

The prototypical quasimodular form is the weight-2 quasimodular Eisenstein series $E_2$, which we write here in terms of its $q$-expansion
\[E_2(\tau) = 1 - 24\sum_{n\geq 1}\sigma_1(n)q^n,\]
where $\sigma_k$ is the sum-of-divisors function $\sigma_k(n)=\sum_{d|n}d^k$.  $E_2$ is a weight-2 quasimodular form, and can be corrected to make an almost holomorphic modular form 
\[E_2^* = E_2-\frac{3}{\pi\Imm}.\]
We will also require the $q$-expansions for the other standard Eisenstein series
\[E_4(\tau) = 1 + 240\sum_{n\geq 1}\sigma_3(n)q^n\]
and
\[E_6(\tau) = 1 - 504\sum_{n\geq 1}\sigma_5(n)q^n.\]

With these building blocks we can create a prototypical almost holomorphic modular function
\[\chi^*=\frac{E_2^*E_4E_6}{\Delta},\]
where $\Delta$ is the classical modular discriminant function $\frac{1}{1728}(E_4^3-E_6^2)$.  There is nothing particularly special about $\chi^*$ except that, since $\Delta$ is known to be non-vanishing within $\uh$, $\chi^*$ has no singularities and is everywhere real analytic.  Moreover the field $F^*$ of almost holomorphic modular functions satisfies $F^*=\mathbb{C}(j,\chi^*)$; though this last is hardly unique to $\chi^*$.  Here $j$ is the classical modular $j$-invariant: we will write $\pi$ for the cartesian product $(j,\chi^*)$ of $j$ and $\chi^*$.

Note that
\[\chi^* = \frac{E_2E_4E_6}{\Delta} - \frac{3}{\pi y}\frac{E_4E_6}{\Delta},\]
so it will be helpful to give the components names.  We will write $\chi =  \frac{E_2E_4E_6}{\Delta}$ and $\xi = \frac{E_4E_6}{\Delta}$, so that
\[\chi^* = \chi - \frac{3}{\pi y}\xi.\]

It is well-known of course that for quadratic numbers $\tau \in\uh$, the number $j(\tau)$ is algebraic, and referred to as a special point, or singular modulus.  It is also true, proven by Masser in \cite{Masser1975}, that $\chi^*(\tau)$ is algebraic for quadratic $\tau$, and in fact $\chi^*(\tau)\in\mathbb{Q}(j(\tau))$.  Such a point will be called a $\chi^*$-special point, while a point $\pi(\tau)=(j(\tau),\chi^*(\tau))$ is called $\pi$-special (when $\tau$ is quadratic).  

Similar special behaviour exists in positive dimensions as well: if $g\in\gl$ and $S=\{(\tau,g\tau):\tau\in\uh\}$, then the set
\[\pi(S)=\{(j(\tau),\chi^*(\tau),j(g\tau),\chi^*(g\tau)):\tau\in\uh\}\]
is contained in an irreducible 2-dimensional variety $V$ defined over $\overline{\mathbb{Q}}$.  Moreover, $V$ depends only on the determinant of $g$ when it is scaled so as to be a primitive integer matrix.  These varieties, together with the $\pi$-special points, are the building blocks of what we shall refer to as $\pi$-special varieties; for more details see \cite{Spence2016}.  In that paper we also prove the following theorem, which is the motivation for much of this paper.

\begin{theorem}[Andr\'e-Oort for $\pi$]\label{thrm:AOforPi}
	Let $V\suq\mathbb{C}^{2n}$ be an algebraic variety.  Then $V$ contains only finitely many maximal $\pi$-special varieties.
\end{theorem}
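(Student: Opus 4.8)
The plan is to reduce this to the known André-Oort theorem for products of modular curves, exploiting the fact that the $\pi$-special structure is in a precise sense definable over the $j$-structure. The key leverage is the relationship $F^*=\mathbb{C}(j,\chi^*)$ together with the algebraicity result of Masser: for quadratic $\tau$ we have $\chi^*(\tau)\in\mathbb{Q}(j(\tau))$. This suggests that a $\pi$-special point is, up to a finite algebraic correspondence, determined by its $j$-coordinates, and similarly the positive-dimensional $\pi$-special varieties $V$ arising from $g\in\gl$ project onto the classical modular curves $Y_0(N)$ (where $N=\det g$ after scaling to a primitive integer matrix). I would therefore set up the projection $p\colon\mathbb{C}^{2n}\to\mathbb{C}^{n}$ that forgets the $\chi^*$-coordinates and retains only the $j$-coordinates, so that $\pi$-special varieties map to classical special (i.e. weakly special of Shimura type) subvarieties of $\mathbb{C}^n$.

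The core of the argument would run as follows. First I would establish the finiteness of the fibres of $p$ restricted to the relevant special loci: because each $\chi^*$-coordinate satisfies an algebraic relation over $\mathbb{Q}(j)$ of bounded degree, the preimage under $p$ of a single classical special point meets only finitely many $\pi$-special points, and likewise each classical special curve lifts to finitely many candidate $\pi$-special curves. Second I would invoke the classical André-Oort theorem for $\mathbb{C}^n$ (equivalently for $Y(1)^n$), which is known unconditionally by the work of Pila, to conclude that the Zariski closure of the $j$-projection of the special points contained in $V$ is a finite union of classical special subvarieties. Third, I would pull this finiteness back up: a maximal $\pi$-special subvariety of $V$ must project into one of these finitely many classical special subvarieties, and by the fibre-finiteness from the first step there can be only finitely many $\pi$-special varieties lying above each.

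An alternative and perhaps cleaner route, more in the spirit of \cite{Spence2016}, would be to run the Pila--Zannier strategy directly on $\pi(S)$-type objects, using a counting theorem (Pila--Wilkie) on a suitable definable set in an o-minimal structure. Here one parametrises $V$ and its special points in the fundamental domain, shows that the $\pi$-special points correspond to rational (or bounded-height algebraic) points of an associated definable set via the real-analytic coordinates $\chi=\frac{E_2E_4E_6}{\Delta}$ and $\xi=\frac{E_4E_6}{\Delta}$, and then combines a counting upper bound with a Galois-theoretic lower bound on the number of conjugates of a special point to force the special locus into finitely many special varieties. The definability of the relevant uniformisation, including the almost-holomorphic correction term $\frac{3}{\pi y}\xi$, is available because the real and imaginary parts of these functions are restricted-analytic on the closure of a fundamental domain.

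The hard part will be the Galois lower bound and the arithmetic of the $\chi^*$-coordinates. For the classical $j$-function one has strong lower bounds on the size of Galois orbits of singular moduli (via the class number of the associated imaginary quadratic order), and the whole Pila--Zannier machine depends on such a bound dominating the Pila--Wilkie counting estimate. I expect the principal obstacle to be controlling the arithmetic complexity (height and degree) of $\chi^*(\tau)$ uniformly in $\tau$: although $\chi^*(\tau)\in\mathbb{Q}(j(\tau))$, one must verify that passing to the $\chi^*$-coordinate does not collapse Galois orbits or introduce uncontrolled height growth that would swamp the counting bound. This is precisely where the careful estimates on the tails of the $q$-expansions, promised in the abstract, become essential; so I anticipate that the technical heart of a self-contained proof lies in bounding $|\chi^*(\tau)|$ and the associated heights, whereas the structural reduction to the classical case is comparatively formal.
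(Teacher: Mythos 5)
Your primary route --- projecting away the $\chi^*$-coordinates and invoking classical Andr\'e-Oort for $Y(1)^n$ --- has a fatal gap at the pull-back step. The fibre-finiteness part is fine (indeed each classical special point lifts to \emph{exactly one} $\pi$-special point, since $j(\sigma)=j(\tau)$ forces $\sigma=\gamma\tau$ with $\gamma\in\slz$ and $\chi^*$ is $\slz$-invariant), but the statement you then pull back is vacuous in exactly the cases that matter. Take $n=1$ and $V\suq\mathbb{C}^2$ a generic algebraic curve: its $j$-projection is Zariski dense in $\mathbb{C}$, so $\overline{p(V)}=\mathbb{C}$, which is itself a special subvariety. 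Classical Andr\'e-Oort then reports the single maximal special subvariety $\mathbb{C}$, containing \emph{all} singular moduli, and the unique $\pi$-special variety lying above it is $\mathbb{C}^2$, which is not contained in $V$; intersecting it with $V$ returns precisely the problem you started with. The root cause is your claim that a $\pi$-special point is ``up to a finite algebraic correspondence, determined by its $j$-coordinates.'' That is false in the geometric sense you need: Masser's theorem $\chi^*(\tau)\in\mathbb{Q}(j(\tau))$ is an arithmetic statement holding at each quadratic $\tau$ separately, via a polynomial relation whose coefficients depend on the discriminant, and it is not induced by any algebraic correspondence of bounded degree between the two coordinates. Indeed $j$ and $\chi^*$ are algebraically independent as functions (the paper uses this repeatedly), so the graph $\{(j(\tau),\chi^*(\tau)):\tau\in\uh\}$ is Zariski dense in $\mathbb{C}^2$, and no algebraic locus over the $j$-line captures the $\chi^*$-coordinate. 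The entire content of the theorem lies in this transcendence, and the projection destroys it.

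Your ``alternative'' route is the correct one, and it is in essence the proof the paper relies on: Theorem \ref{thrm:AOforPi} is not proved in this paper at all, but quoted from \cite{Spence2016}, where it is established by the Pila--Zannier strategy --- o-minimal definability of $(j,\chi^*)$ restricted to $\mathbb{F}$ (the correction term $\frac{3}{\pi y}\xi$ causes no difficulty for definability), the Pila--Wilkie counting theorem, a functional-transcendence input, and a Galois lower bound. Your worry that passing to $\chi^*$-coordinates might collapse Galois orbits is the right one to have; it is handled by Galois equivariance --- $\theta(j(\tau))=j(\tau')$ implies $\theta(\chi^*(\tau))=\chi^*(\tau')$ (Proposition 5.2 of \cite{Spence2016}, quoted here in the proof of Corollary \ref{cor:FieldEquality}) --- together with Siegel's class-number lower bound on the $j$-side, which is exactly the ineffective ingredient the paper blames for the ineffectivity of Theorem \ref{thrm:AOforPi}. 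So you have sketched the right proof, but only as a fallback; your preferred reduction cannot be repaired, because the theorem is genuinely not a consequence of Andr\'e-Oort for the $j$-coordinates alone.
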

This was proven using techniques from o-minimality which are ineffective and rather difficult to make effective; it also relies heavily on a Galois bound provided by Siegel which is ineffective.  This lack of effectivity, however, is not so unusual; in general, techniques powerful enough to deal with classical Andr\'e-Oort results in full generality tend to be ineffective for similar reasons.

Several effective and explicit results of Andr\'e-Oort type do exist in the classical context, however.  For instance work of K\"uhne\cite{Kuehne2012}/Bilu-Masser-Zannier \cite{Bilu2013}, who prove:

\begin{theorem}\label{thrm:effectiveAOforJ}
	Let $V\suq\mathbb{C}^2$ be an algebraic curve defined over $\overline{\mathbb{Q}}$.  Then there are effectively computable constants $c_i=c_i(V)$ such that whenever $(j(\tau_1),j(\tau_2))\in V$ with quadratic $\tau_i$ and $d_i$ is the absolute value of the discriminant of $\tau_i$, either
	\[\max(d_1,d_2)\leq c_1\]
	or there is a primitive integer matrix $g$ of determinant at most $c_2$ such that $\tau_2 = g\tau_1$.
\end{theorem}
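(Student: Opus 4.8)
The plan is to follow the approach of Bilu--Masser--Zannier and K\"uhne, replacing the ineffective class-number lower bound of Siegel (which is exactly what makes Theorem~\ref{thrm:AOforPi} ineffective) by direct, explicit control of the sizes of the individual Galois conjugates of a singular modulus. First I would record the defining data of $V$: let $P\in K[X,Y]$ cut out $V$, with $K$ the number field generated by its coefficients, $D=\deg P$ and $H$ its height. Taking the product of $P$ over the embeddings $K\hookrightarrow\overline{\mathbb{Q}}$ produces a polynomial over $\mathbb{Q}$ (over $\mathbb{Z}$ after clearing denominators) of degree at most $[K:\mathbb{Q}]D$ still vanishing at $(j(\tau_1),j(\tau_2))$, so I may assume from the outset that $V$ is defined over $\mathbb{Q}$; this costs only an effective factor in the constants and is harmless, since any isogeny the argument produces will genuinely relate $\tau_1$ and $\tau_2$. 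The cases where $P$ involves a single variable are disposed of immediately: if $V=\{X=\alpha\}$ then $\alpha=j(\tau_1)$ has only finitely many preimages in a fundamental domain, each a fixed quadratic point, so $d_1$ is bounded outright in terms of $\alpha$ with no arithmetic input, and symmetrically for $Y$.

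The argument then rests on two facts, both of which I would make completely explicit. The conjugates of $j(\tau)$ over $\mathbb{Q}$ are exactly the values $j(\tau')$ as $\tau'$ runs over the reduced forms $(a,b,c)$ of discriminant $-d$, and the whole Galois orbit stays on $V$ now that $V$ is defined over $\mathbb{Q}$. Using $j=q^{-1}+744+\cdots$ together with an explicit bound on the tail of the $q$-expansion (precisely the kind of estimate the rest of this paper develops for $\chi^*$) one obtains an effective inequality of the shape $\big|\log|j(\tau')|-\pi\sqrt{d}/a\big|\le c_0$ for an absolute constant $c_0$, where $a$ is the leading coefficient of $\tau'$. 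In particular there is a dominant conjugate, coming from a reduced form with $a=1$ (which always exists), of size $e^{\pi\sqrt{d}+O(1)}$, while every other conjugate has size at most $e^{\pi\sqrt{d}/2+O(1)}$.

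Now I would run the comparison. Choose $\sigma\in\operatorname{Gal}(\overline{\mathbb{Q}}/\mathbb{Q})$ sending $j(\tau_1)$ to its dominant conjugate; then $(\sigma j(\tau_1),\sigma j(\tau_2))$ is a point of $V$ whose first coordinate has modulus $e^{\pi\sqrt{d_1}+O(1)}$, while its second coordinate, being some conjugate of $j(\tau_2)$, has modulus at most $e^{\pi\sqrt{d_2}+O(1)}$. Viewing $P(X,Y)=0$ with $|X|$ enormous and balancing it against the Newton polygon of $P$ forces $\log|Y|=\rho\log|X|+O(1)$ for one of the finitely many slopes $\rho$ of that polygon, each a rational of height at most $D$. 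This yields an effective inequality relating $\sqrt{d_1}$ and $\sqrt{d_2}$; combined with the symmetric inequality obtained by taking $\tau_2$ dominant, it pins $d_1$ and $d_2$ to be comparable. Once they are comparable the leading coefficient of the non-dominant form at the balancing point is itself bounded, and one extracts a relation $m\sqrt{d_1}=n\sqrt{d_2}+O(1)$ with $m,n$ effectively bounded in terms of $D$. As soon as $\max(d_1,d_2)$ exceeds a threshold $c_1$ the error becomes negligible and the relation forces $\sqrt{d_1}/\sqrt{d_2}\in\mathbb{Q}$, so that $\tau_1$ and $\tau_2$ generate the same imaginary quadratic field $F$. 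Any two points of $\uh$ generating the same $F$ are $\gl$-equivalent, and the bounded height of the proportionality bounds the determinant of the associated primitive integer matrix $g$ by an effective $c_2$, which is exactly the second alternative.

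The main obstacle, and the whole point of the result, is doing all of this effectively. The proof of Theorem~\ref{thrm:AOforPi}-type statements feeds a Galois lower bound $h(d)\gg_\varepsilon d^{1/2-\varepsilon}$ into a counting argument, but that bound rests on Siegel's theorem and is ineffective. The argument above never counts conjugates: it uses only the size of individual conjugates, and in particular needs only the dominant conjugate to \emph{exist}, which it does for every discriminant, so no class-number lower bound ever enters. Consequently the technical crux is entirely analytic, namely producing the explicit tail estimate behind $\big|\log|j(\tau')|-\pi\sqrt{d}/a\big|\le c_0$ with a usable constant, and making the Newton-polygon balancing watertight (in particular controlling the error terms uniformly when the leading coefficient $a$ of the non-dominant form is itself large). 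This is precisely the ad-hoc estimation that the remainder of the paper must carry out in the less forgiving setting of $\chi^*$.
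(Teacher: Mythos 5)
You are attempting to prove a result that the paper only quotes: Theorem \ref{thrm:effectiveAOforJ} is cited from K\"uhne and Bilu--Masser--Zannier, and the paper itself records the key fact about those proofs that your proposal contradicts, namely that they ``relied heavily on effective estimates by Baker on logarithms in algebraic numbers,'' the Baker-free portion being only the ``easy half'' (which is what the paper's proof of Theorem \ref{thrm:EffectivePiAO} reproduces). Your proposal claims to close the whole argument with nothing beyond $q$-expansion estimates, Newton-polygon balancing, and a Liouville-type separation, and the gap sits exactly where Baker's theorem is needed. The fatal step is: ``As soon as $\max(d_1,d_2)$ exceeds a threshold $c_1$ the error becomes negligible and the relation $m\sqrt{d_1}=n\sqrt{d_2}+O(1)$ forces $\sqrt{d_1}/\sqrt{d_2}\in\mathbb{Q}$.'' This inference is invalid, and it gets worse, not better, as the discriminants grow. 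If $d_1d_2$ is not a perfect square, the nonzero number $m\sqrt{d_1}-n\sqrt{d_2}$ admits only the Liouville lower bound
\[
\left|m\sqrt{d_1}-n\sqrt{d_2}\right|=\frac{|m^2d_1-n^2d_2|}{m\sqrt{d_1}+n\sqrt{d_2}}\geq\frac{1}{m\sqrt{d_1}+n\sqrt{d_2}},
\]
which tends to $0$, while your error term stays $O(1)$; so nothing is forced. Concretely, taking $d_2=d_1+1$ (with congruences arranged so both are discriminants) satisfies $\sqrt{d_1}=\sqrt{d_2}+O(1/\sqrt{d_1})$ yet $d_1/d_2$ is never a square, so the hypothesis of your inference can hold while its conclusion fails. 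To rescue the argument one must (i) refine the $O(1)$ in the Newton-polygon step to an exponentially small error via the full Puiseux expansion, which introduces the leading Puiseux coefficient $c$, and then (ii) bound from below a quantity of the shape $\left|\pi\bigl(\sqrt{d_2}/a-\rho\sqrt{d_1}\bigr)-\log|c|\right|$ with $c$ algebraic. That is a linear form in logarithms (note the presence of $\pi$ alongside $\log|c|$), and no Liouville-type estimate reaches it: this is precisely the point at which K\"uhne and Bilu--Masser--Zannier invoke Baker's theory, and it is also precisely the obstruction (the transcendental $\pi$) that the paper identifies as blocking the analogous Conjecture \ref{conj:effectiveChiAO} for $\chi^*$.

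There is a second, independent gap at the end: even granting $\mathbb{Q}(\tau_1)=\mathbb{Q}(\tau_2)$, the determinant of the primitive integer matrix $g$ with $\tau_2=g\tau_1$ is \emph{not} controlled by the height of $\sqrt{d_1}/\sqrt{d_2}$. Two reduced quadratic points of the \emph{same} discriminant $d$ (proportionality constant $1$) can require $\det g$ of size comparable to $\sqrt{d}$: for instance $\tau_1=\sqrt{-d}/2$ and $\tau_2=(-b+\sqrt{-d})/(2a)$ are related by $g=\begin{pmatrix}2&-b\\0&2a\end{pmatrix}$, with $a$ as large as $\asymp\sqrt{d}$ among reduced forms. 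In the genuine proofs the bounded-determinant alternative is extracted from the exact multiplicative relation that Baker's theorem forces between the two coordinates (placing the point on a modular curve $Y_0(N)$ of bounded level), not from field equality alone. So both the rationality step and the final isogeny step need the transcendence input your proposal omits.
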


Besides this effective Andr\'e-Oort result for $j$, there are also a number of what we'll call ``explicit'' results; theorems which answer specific questions about the special subvarieties of particular varieties $V$.  For instance, work of Pila and Tsimerman who proved in \cite{Pila2014a} that with obvious exceptions there are only finitely many multiplicatively dependent $n$-tuples of singular moduli, or of Bilu, Luca and Masser \cite{Bilu2017}, who proved:

\begin{theorem}\label{thrm:collinearJPoints}
	Barring obvious exceptions, there are only finitely many triples 
	\[(j(\tau_1),j(\tau_2)),\qquad(j(\tau_3),j(\tau_4)),\qquad(j(\tau_5),j(\tau_6))\]
	with $\tau_i\in\uh$ all quadratic.
\end{theorem}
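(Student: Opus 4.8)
The plan is to translate collinearity into the vanishing of a determinant and then run a size-and-conjugation argument in the style underlying Theorem \ref{thrm:effectiveAOforJ} (see \cite{Bilu2013}). Writing $x_1=j(\tau_1)$, $y_1=j(\tau_2)$, $x_2=j(\tau_3)$, $y_2=j(\tau_4)$, $x_3=j(\tau_5)$, $y_3=j(\tau_6)$, the three points are collinear precisely when
\[
\det\begin{pmatrix} x_1 & y_1 & 1\\ x_2 & y_2 & 1\\ x_3 & y_3 & 1\end{pmatrix}
= x_1(y_2-y_3)-x_2(y_1-y_3)+x_3(y_1-y_2)=0.
\]
This is a polynomial relation with integer coefficients among the six singular moduli, so it is preserved by the absolute Galois group of $\mathbb{Q}$: if a configuration is collinear, so is every Galois conjugate of it. Before exploiting this I would set aside the obvious exceptions, namely the degenerate configurations in which collinearity holds for free --- when two of the three points coincide, or when all three share a common first coordinate ($x_1=x_2=x_3$) or common second coordinate ($y_1=y_2=y_3$); each produces an infinite family and must be excluded. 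A short argument shows that outside these cases the coefficient multiplying any one modulus in the expansion above is a genuine difference of two \emph{distinct} singular moduli.

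The engine of the argument is the standard size estimate for singular moduli. If $\tau$ has discriminant $\Delta<0$ and is presented by a reduced binary quadratic form $(a,b,c)$, then $\Imm\tau=\sqrt{|\Delta|}/(2a)$ and, from the $q$-expansion $j=q^{-1}+744+\cdots$,
\[
\bigl|\,|j(\tau)|-e^{\pi\sqrt{|\Delta|}/a}\,\bigr|\le C
\]
for an effective absolute constant $C$. The conjugates of $j(\tau)$ over $\mathbb{Q}$ are exactly the values $j(\tau')$ as $\tau'$ ranges over the reduced forms of discriminant $\Delta$, and the \emph{dominant} conjugate --- the one coming from the principal form $a=1$ --- has absolute value $\approx e^{\pi\sqrt{|\Delta|}}$, whereas every other conjugate has $a\ge 2$ and so is bounded by roughly $e^{\pi\sqrt{|\Delta|}/2}$. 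This exponential gap between the dominant conjugate and all the others, combined with an effective lower bound for $|j(\tau)-j(\tau')|$ between distinct singular moduli, is what makes the method effective and sidesteps the ineffective class-number input used for Theorem \ref{thrm:AOforPi}.

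With these in hand the argument runs as follows. Let $\Delta$ be the largest discriminant (in absolute value) among the six moduli; by the symmetry of the determinant we may assume it is attained by $x_1$. When the six discriminants are \emph{distinct} the corresponding ring class fields are essentially independent, so I can choose a single $\sigma$ in the Galois group sending $x_1$ to its dominant conjugate while sending each of $x_2,x_3,y_1,y_2,y_3$ to a \emph{non}-dominant conjugate. Applying $\sigma$ to the relation then pits the term $\sigma(x_1)\bigl(\sigma(y_2)-\sigma(y_3)\bigr)$, of size $\approx e^{\pi\sqrt{|\Delta|}}$ times a factor bounded below, against a sum of terms of strictly smaller exponential order. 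Since $\sigma(y_2)\neq\sigma(y_3)$ outside the excluded cases, the large term cannot be cancelled, forcing $|\Delta|$ below an effective bound. A bounded discriminant leaves only finitely many singular moduli, hence finitely many triples, and the theorem follows.

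The main obstacle is precisely the hypothesis that the discriminants be distinct. Two moduli sharing a discriminant lie in the same ring class field, so their conjugations are locked together and one cannot in general be made dominant while the other is kept small; likewise, equal fundamental discriminants with differing conductors give nested fields in which the independence of $\sigma$ fails. Resolving this requires a genuine case analysis, organised by the number of distinct fundamental discriminants among the six moduli and by which coordinate the coincidences fall in, replacing the clean isolation above by a more delicate comparison in which several dominant conjugates may appear simultaneously and must be balanced against one another (and in which the lower bound on differences must be strong enough to survive). Controlling these coincident cases --- and checking in each that the residual relation still forces a contradiction for large $|\Delta|$ --- is where the real work of the proof lies.
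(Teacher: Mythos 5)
First, a point of orientation: the paper does not prove Theorem \ref{thrm:collinearJPoints} at all --- it is quoted as a known result of Bilu--Luca--Masser \cite{Bilu2017}, and the paper itself only proves a $\pi$-analogue (Theorem \ref{thrm:collinearPiPoints}) by a different route, namely the ineffective Andr\'e--Oort statement of Theorem \ref{thrm:AOforPi} followed by a $q$-expansion analysis of the positive-dimensional special subvarieties that would otherwise arise. So your attempt must be measured against the published proof in \cite{Bilu2017}, whose structure the paper summarizes in Section \ref{sect:collinear}: $q$-expansion estimates and Galois arguments (their Lemmas 4.1--4.2, 5.1--5.9, 7.3), a reduction to identities among $j$-maps organized by level and twist, and, crucially, Theorem 1.2 of Allombert--Bilu--Pizarro-Madariaga \cite{Allombert2015} on linear relations among singular moduli. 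Your sketch does capture the correct opening moves of that argument: the determinant formulation, Galois invariance of the collinearity relation, and the dominant-conjugate size estimates coming from $j = q^{-1} + 744 + 196884q + \cdots$.

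However, the proposal has a genuine, twofold gap. First, even in your ``distinct discriminants'' case, the claim that the relevant ring class fields are ``essentially independent,'' so that a single $\sigma$ can send $x_1$ to its dominant conjugate while keeping all five other moduli non-dominant, is unjustified and false as stated: distinct discriminants may share a fundamental discriminant (e.g.\ $-3$, $-12$, $-27$), and even distinct fundamental discriminants yield class fields with nontrivial common subfields (genus theory), so the Galois group of the compositum need not surject onto the product of the individual Galois groups. Taming exactly this entanglement is what forces the field-theoretic lemmas of \cite{Bilu2017} and the appeal to \cite{Allombert2015} --- the very ingredient the paper identifies as the obstruction to proving its own Conjecture \ref{conj:collinearChiPoints}. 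Second, you explicitly defer the coincident-discriminant configurations with the remark that this is ``where the real work of the proof lies,'' and that assessment is accurate: the case analysis occupying Sections 8 onwards of \cite{Bilu2017}, in which several dominant conjugates must be balanced against one another via levels and twists of $j$-maps, \emph{is} the proof rather than a technical appendix to it. As written, your argument yields finiteness only under an unproved independence hypothesis and in the easiest configuration, so it does not establish the theorem.
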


The goal of this paper is to investigate analogues of theorems \ref{thrm:effectiveAOforJ} and \ref{thrm:collinearJPoints} for $\chi^*$ and for $\pi$.  In Section \ref{sect:qExp} we calculate various bounds on the $q$-expansions involved and prove a weak analogue of Theorem \ref{thrm:effectiveAOforJ}, namely:
\begin{theorem}\label{thrm:EffectivePiAO}
	Let $X\suq\mathbb{C}^2$ be an algebraic curve defined over $\overline{\mathbb{Q}}$.  Then there is an effectively computable constant $c=c(X)$ such that for quadratic $\tau\in\uh$, 
	\[(j(\tau),\chi^*(\tau))\in X \implies \text{ the absolute value of the discriminant of }\tau \text{ is at most }c.\]
\end{theorem}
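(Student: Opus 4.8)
The plan is to follow the ``one large conjugate'' strategy of Bilu--Masser--Zannier: exhibit a single Galois conjugate of $(j(\tau),\chi^*(\tau))$ that sits very close to the cusp, and then show that a fixed algebraic curve cannot pass through such a point once the discriminant is large. The pleasant feature of this approach is that it needs only the \emph{existence} of the principal conjugate, so it avoids any lower bound on the class number and stays fully effective (in contrast to Theorem~\ref{thrm:AOforPi}).

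First I would reduce to a curve defined over $\mathbb{Q}$. Since $X$ is defined over some number field $K$, I replace it by the union $Y=\bigcup_\sigma X^\sigma$ of its conjugates over the embeddings $\sigma\colon K\hookrightarrow\overline{\mathbb{Q}}$; this is a curve defined over $\mathbb{Q}$ whose degree and coefficient heights are effectively bounded in terms of $X$, and we still have $(j(\tau),\chi^*(\tau))\in Y$. By Masser's theorem \cite{Masser1975} we have $\chi^*(\tau)\in\mathbb{Q}(j(\tau))$, and the values of $\chi^*$ at conjugate quadratic points are themselves Galois-conjugate, so the $\mathbb{Q}$-conjugates of $(j(\tau),\chi^*(\tau))$ are exactly the pairs $(j(\tau'),\chi^*(\tau'))$ as $\tau'$ runs over the CM points of the same discriminant $-d$. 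All of these lie on $Y$ because $Y$ is defined over $\mathbb{Q}$. Taking $\tau'=\tau_0$ to be the principal form, with $\Imm\tau_0=\tfrac12\sqrt d$, I obtain a point $(j(\tau_0),\chi^*(\tau_0))\in Y$ whose coordinates are enormous.

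Next I would expand near the cusp. Let $Q(x,z)=\sum_{a+b\le\delta}c_{ab}x^az^b=0$ cut out a component of $Y$ through this point, with $\delta=\deg Q$, and set $q=e^{2\pi i\tau_0}$, $y=\Imm\tau_0$ and $t=\tfrac{3}{\pi y}=\tfrac{6}{\pi\sqrt d}$, so that $|q|=e^{-\pi\sqrt d}$. The $q$-expansions give $j=q^{-1}+744+\cdots$, and since $\chi$ and $\xi$ both have leading term $q^{-1}$ we get $\chi^*=\chi-\tfrac{3}{\pi y}\xi=q^{-1}(1-t)+\cdots$. Substituting and factoring out $q^{-\delta}$, the dominant contribution comes from the top-degree part of $Q$ and equals $q^{-\delta}R(t)$, where $R(t)=\sum_{a+b=\delta}c_{ab}(1-t)^b$. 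Because $\delta=\deg Q$, the top-degree part of $Q$ is nonzero, so $R$ is not the zero polynomial; hence $R$ vanishes at $t=0$ to some finite order $m\ge0$, and $|R(t)|\ge c_0\,d^{-m/2}$ for $d$ large, with $c_0$ effective. Every remaining contribution to $Q(j(\tau_0),\chi^*(\tau_0))$ --- the lower total-degree monomials, the constant term $744$, and the higher tails of the expansions of $j,\chi,\xi$ --- is $O(q^{-\delta+1})=O\!\big(e^{-\pi\sqrt d}\,|q|^{-\delta}\big)$, exponentially smaller than the polynomially small main term. So once $d$ exceeds an effective threshold we get $|Q(j(\tau_0),\chi^*(\tau_0))|>0$, contradicting $(j(\tau_0),\chi^*(\tau_0))\in Y$, which yields the bound $d\le c(X)$.

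The main obstacle is precisely this final magnitude comparison: to turn the informal ``$+\cdots$'' into rigorous effective inequalities I must bound the tails of all the building-block $q$-expansions ($E_2,E_4,E_6,\Delta,j,\chi,\xi$) explicitly and control how those bounds propagate through products and $\delta$-th powers, so that the exponentially small remainder provably dominates the merely polynomially small $|R(t)|$ --- the delicate case being $m\ge1$, where the leading coefficient is itself small. This explicit and admittedly somewhat ad hoc tail analysis is the technical heart of the matter and is the content of Section~\ref{sect:qExp}. The non-holomorphic factor $\tfrac{1}{y}$ in $\chi^*$ also demands care, but pleasantly it helps rather than hinders, since it is what prevents $R(t)$ from degenerating identically.
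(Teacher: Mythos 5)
Your proposal is correct and takes essentially the same route as the paper's own proof: reduce via the Galois-equivariance of $\tau\mapsto(j(\tau),\chi^*(\tau))$ to the principal conjugate $\tau_0$ with $\Imm\tau_0=\tfrac{1}{2}\sqrt{d}$, expand $p(j,\chi^*)$ near the cusp so that the coefficient of $q^{-\deg p}$ is a nonzero polynomial $R(t)$ in $t=3/(\pi\Imm\tau)$, and play an effective Liouville-type lower bound for $|R(t)|$ against the exponentially small tail, which is exactly what the estimates of Section~\ref{sect:qExp} (Propositions \ref{propn:strongerChiBounds} and \ref{propn:weakerChiBounds}) are for. If anything, your treatment of the leading coefficient is more careful than the paper's: you allow $R$ to vanish at $t=0$ to some order $m\geq 1$ and use the polynomial lower bound $|R(t)|\gg d^{-m/2}$, whereas the paper asserts ``no cancellation'' among the $t$-terms and takes the relevant coefficient to be a single coefficient of $p$ --- which is not literally true (for $p=X-Y$ the constant term of $R$ already cancels), though the fix is exactly your observation that the lowest-order nonzero coefficient is an integer combination of coefficients of $p$ with effectively bounded height, so the polynomial-versus-exponential comparison goes through identically.
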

This is of course not a perfect analogue of \ref{thrm:effectiveAOforJ}; the ideal analogue would have two copies of $\chi^*$, rather than a $j$ and a $\chi^*$.  We discuss this very briefly in Section \ref{sect:qExp}; the summary is that we are not certain how to approach such a conjecture.

\bigskip

In Section \ref{sect:collinear} we work on a weak analogue of Theorem \ref{thrm:collinearJPoints}:
\begin{theorem}\label{thrm:collinearPiPoints}
	There are only finitely many collinear triples
	\[P_1=(j(\tau_1),\chi^*(\tau_1)),\qquad P_2=(j(\tau_2),\chi^*(\tau_2)),\qquad P_3=(j(\tau_3),\chi^*(\tau_3))\]
	with $\tau_i$ quadratic and $P_i$ pairwise distinct.
\end{theorem}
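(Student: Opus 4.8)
The plan is to deduce this from the qualitative Andr\'e-Oort theorem for $\pi$ (Theorem \ref{thrm:AOforPi}) together with a cusp analysis that rules out ``identically collinear'' families. Collinearity of $P_1,P_2,P_3$ is exactly the vanishing of the determinant
\[
D(\tau_1,\tau_2,\tau_3)=\det\begin{pmatrix} j(\tau_1)&\chi^*(\tau_1)&1\\ j(\tau_2)&\chi^*(\tau_2)&1\\ j(\tau_3)&\chi^*(\tau_3)&1\end{pmatrix},
\]
so the condition $\{D=0\}$ cuts out an algebraic hypersurface $V\suq\mathbb{C}^6=(\mathbb{C}^2)^3$ defined over $\overline{\mathbb{Q}}$, and each admissible triple corresponds to a $\pi$-special point of $V$. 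By Theorem \ref{thrm:AOforPi}, $V$ contains only finitely many maximal $\pi$-special subvarieties. If there were infinitely many collinear triples, infinitely many of the associated $\pi$-special points would lie in $V$; since every $\pi$-special point is contained in some maximal $\pi$-special subvariety, the pigeonhole principle would place infinitely many of them inside a single positive-dimensional $\pi$-special variety $W\suq V$. Thus it suffices to show that no positive-dimensional $\pi$-special $W$ compatible with pairwise distinctness can be contained in $V$.

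First I would dispose of the ``unlinked'' configurations. A positive-dimensional $\pi$-special $W$ is built by partitioning $\{\tau_1,\tau_2,\tau_3\}$ into blocks, freeing one $\tau$ in each block and relating the others to it by a fixed $g\in\gl$, while the remaining coordinates are frozen at $\pi$-special points. If some block is a single free $\tau_i$, then the projection of $W$ to that $\mathbb{C}^2$ factor is Zariski dense, since $j$ and $\chi^*$ are algebraically independent and hence $\pi(\uh)$ is dense in $\mathbb{C}^2$. For such a $W$ to lie in $V$ one would need the two frozen points to be collinear with \emph{every} point of $\mathbb{C}^2$, forcing those two points to coincide and violating pairwise distinctness. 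Hence each free $\tau$ must be genuinely linked to the others; and since $g\in\slz$ acts trivially on $j$ and on the $\slz$-invariant $\chi^*$, giving $\pi(g\tau)=\pi(\tau)$ and a repeated $P_i$, every linking matrix must induce a nontrivial modular correspondence.

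What remains are the ``fully linked'' families, where up to relabelling $\tau_2=g\tau_1$ and $\tau_3=h\tau_1$ (with $\tau_3$ possibly instead frozen at a quadratic point), parametrised by the single variable $\tau=\tau_1$. For $W\suq V$ the function $\tau\mapsto D(\tau,g\tau,h\tau)$ must vanish identically on $\uh$, and the crux is to show that it does not. I would attack this through the $q$-expansion estimates of Section \ref{sect:qExp}, letting $\tau\to i\infty$ and comparing the rates at which the six entries blow up. The delicate feature is that $j(\tau)$ and $\chi^*(\tau)$ share the leading term $q^{-1}$ at the cusp, the $-\tfrac{3}{\pi y}\xi$ correction contributing only a lower-order $y^{-1}q^{-1}$ term, so $\pi(\tau)$ escapes to infinity along the diagonal direction and the leading contributions to $D$ cancel. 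One must therefore extract a genuinely subleading term: the distinct $q$-growth rates attached to the distinct correspondences (for instance $j(N\tau)\sim q^{-N}$ against $j(\tau/N)\sim q^{-1/N}$), together with the non-holomorphic $y^{-1}$ contributions isolated in Section \ref{sect:qExp}, should obstruct total cancellation and force $D(\tau,g\tau,h\tau)\not\equiv 0$.

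The main obstacle is precisely this asymptotic bookkeeping. Because the holomorphic parts of $j$ and $\chi^*$ agree to leading order, the naive comparison is inconclusive, and one is pushed into a careful accounting of subleading terms in $\mathbb{C}((q))[y^{-1}]$ -- exactly the tail estimates assembled in Section \ref{sect:qExp}. A secondary technical point is to treat uniformly those correspondences $g$ whose lower-left entry is nonzero, which send the vertical cusp to a rational boundary point rather than to $i\infty$; there I would re-expand at the relevant cusp using the width of the correspondence, again reducing to a comparison of growth rates. Granting that $D$ is not identically zero on any fully-linked family, each such $W$ meets $V$ in a proper subvariety, contradicting the assumption that $W\suq V$ carries infinitely many of our $\pi$-special points, and the theorem follows.
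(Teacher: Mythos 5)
Your global strategy is the same as the paper's: cut out $V\suq\mathbb{C}^6$ by the vanishing of the determinant, invoke Theorem \ref{thrm:AOforPi} plus pigeonhole to produce a positive-dimensional $\pi$-special $W\suq V$, dispose of configurations with a free unlinked coordinate (or with two frozen points) using affine-linearity of the determinant, algebraic independence of $j$ and $\chi^*$, and pairwise distinctness, and then try to show the determinant cannot vanish identically along a linked family by expanding at the cusp. Up to that point your reductions are sound and match the paper. Also, your secondary worry about matrices with nonzero lower-left entry is moot: since $j$ and $\chi^*$ are $\slz$-invariant, every map can be normalized to the upper-triangular form $\begin{pmatrix}a&b\\0&d\end{pmatrix}$ -- this is exactly the level/twist formalism -- so all expansions take place at $i\infty$.

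The genuine gap is that the step you explicitly ``grant'' -- that $D$ is not identically zero on a fully linked family -- is the entire content of the theorem, and your heuristic for it (distinct growth rates plus the $y^{-1}$ contributions ``should obstruct total cancellation'') cannot be made into a proof by growth-rate comparison alone, because cancellation of the candidate dominant term genuinely can occur. The paper's argument has three ingredients you are missing. First, Lemma 7.2 of \cite{Bilu2017} is used to normalize so that one map, say the first, has level $r_1$ strictly greater than the levels $r_2,r_3$ of the other two; without this strict gap the bookkeeping does not start. Second, Proposition \ref{propn:weakerChiBounds} (boundedness of all tails on $\mathbb{F}$) reduces identical vanishing of $D$ to identical vanishing of the determinant of leading terms, and an explicit computation -- using precisely the fact that the $j$-row and $\chi^*$-row share their holomorphic leading terms, so that only the nonholomorphic corrections $-\frac{3}{\pi r_i y}\eta_i q^{-r_i}$ survive at top order -- identifies the dominant term: when $r_2\ne r_3$ it is $\frac{3}{\pi y}\eta_1\eta_2\left(r_1^{-1}-r_2^{-1}\right)q^{-r_1-r_2}$, nonzero since $r_1>r_2$, and the same expression handles the case of one constant pair. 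Third, and crucially, when $r_2=r_3$ the dominant term becomes $\frac{3}{\pi y}\eta_1(\eta_2-\eta_3)\left(r_1^{-1}-r_2^{-1}\right)q^{-r_1-r_2}$, which \emph{does} vanish whenever the twists satisfy $\eta_2=\eta_3$; asymptotics cannot rule this out, and the proof instead closes by noting that equal levels and equal twists force $F_2=F_3$, contradicting pairwise distinctness. Without the strict level gap, the explicit dominant-term coefficient $r_1^{-1}-r_2^{-1}$, and this equal-level case analysis feeding back into distinctness, your argument does not reach the conclusion.
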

Just as \ref{thrm:EffectivePiAO} is not a perfect analogue of the classical version \ref{thrm:effectiveAOforJ}, Theorem \ref{thrm:collinearPiPoints} is not a perfect analogue to \ref{thrm:collinearJPoints}.  We would very much like to have the following:

\begin{conjecture}\label{conj:collinearChiPoints}
	There are only finitely many triples
	\[P_1=(\chi^*(\tau_1),\chi^*(\tau_2)),\qquad P_2=(\chi^*(\tau_3),\chi^*(\tau_4)),\qquad P_3=(\chi^*(\tau_5),\chi^*(\tau_6))\]
	with $\tau_i$ quadratic, such that the $P_i$ are pairwise distinct and belong to a straight line which is neither horizontal, vertical nor the diagonal $x=y$.
\end{conjecture}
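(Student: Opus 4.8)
The plan is to realise a collinear triple as a single $\pi$-special point of an auxiliary variety and then invoke Theorem \ref{thrm:AOforPi}. Writing the coordinates on $\mathbb{C}^6=(\mathbb{C}^2)^3$ as $(x_1,y_1,x_2,y_2,x_3,y_3)$, let $V\suq\mathbb{C}^6$ be the collinearity hypersurface cut out by
\[\det\begin{pmatrix}x_1&y_1&1\\x_2&y_2&1\\x_3&y_3&1\end{pmatrix}=0,\]
a variety defined over $\mathbb{Q}$. A collinear triple with the $\tau_i$ quadratic is exactly a $\pi$-special point of $V$. By Theorem \ref{thrm:AOforPi} with $n=3$, $V$ contains only finitely many maximal $\pi$-special subvarieties; the $0$-dimensional ones are finite in number, so it suffices to show that every \emph{positive-dimensional} $\pi$-special subvariety $W\suq V$ is contained in some locus $\{P_i=P_j\}$ with $i\neq j$. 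Any $\pi$-special point carried by such a $W$ then has two of its three points equal, and so contributes no admissible (pairwise distinct) triple.

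Next I would classify the positive-dimensional $\pi$-special subvarieties of $(\mathbb{C}^2)^3$. Since the map $\pi$ is dominant onto $\mathbb{C}^2$ on each block, such a subvariety arises by partitioning the three blocks into groups linked by fixed matrices of $\gl$ (so that within a group the arguments read $\tau,g\tau,\dots$) and fixing some of the groups to $\pi$-special points. If any block is genuinely free and unlinked, the collinearity determinant is a nonconstant affine function of that block's coordinates, so on $W$ it either forces the other two points to coincide identically (giving $W\suq\{P_i=P_j\}$) or is outright impossible. This disposes of every case except the one in which all moving points are linked, so that after relabelling we are reduced to: (a) two linked points $\pi(\tau),\pi(g\tau)$ together with one fixed $\pi$-special point $P_3$; or (b) three linked points $\pi(\tau),\pi(g\tau),\pi(h\tau)$. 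In each case $W\suq V$ says precisely that the corresponding determinant, now a real-analytic function of $\tau\in\uh$, vanishes identically, and we must show this forces $g$ to act trivially, or $g=h$, and hence two points to coincide.

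The analytic core is to convert this identity into holomorphic data. Writing $g_1=\mathrm{id}$, $g_2=g$, $g_3=h$ (with the evident modification when one point is fixed) and letting $c_i,d_i$ be the bottom row of $g_i$, I expand the determinant along the $\chi^*$-column and substitute $\chi^*(g_i\tau)=\chi(g_i\tau)-\tfrac{3}{\pi}\xi(g_i\tau)/\Imm(g_i\tau)$ with $\Imm(g_i\tau)=(\det g_i)\,\Imm\tau\,/\,|c_i\tau+d_i|^2$, obtaining
\[\sum_i\chi(g_i\tau)A_i-\frac{3}{\pi\,\Imm\tau}\sum_i\frac{|c_i\tau+d_i|^2}{\det g_i}\,\xi(g_i\tau)\,A_i\equiv 0,\]
where the $A_i$ are the $2\times 2$ cofactors of the $\chi^*$-column, each equal up to sign to a difference $j(g_k\tau)-j(g_l\tau)$ of the other two $j$-values and hence holomorphic in $\tau$. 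Treating $\tau$ and $\overline\tau$ as independent, writing $1/\Imm\tau=2i/(\tau-\overline\tau)$ and $|c_i\tau+d_i|^2=(c_i\tau+d_i)(c_i\overline\tau+d_i)$, I clear the denominator and read off the coefficients of $\overline\tau$; since the function extends holomorphically in $(\tau,\overline\tau)$, its vanishing on the diagonal forces both coefficients to vanish, yielding two purely holomorphic identities, the cleanest being
\[\sum_i\frac{(c_i\tau+d_i)^2}{\det g_i}\,\xi(g_i\tau)\,A_i\equiv 0,\]
with a companion identity for $\sum_i\chi(g_i\tau)A_i$ in which the $E_2$-anomaly factor $-\tfrac{6i}{\pi}c_i(c_i\tau+d_i)$ appears --- a reassuring sign that these are genuine structural relations among $j$, $\chi$, $\xi$ and their $\gl$-transforms.

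The main obstacle is the final step: showing these holomorphic identities cannot hold for pairwise distinct, nontrivially linked points. Here I would feed in the $q$-expansion tail estimates developed in Section \ref{sect:qExp} and let $\tau\to i\infty$, comparing the orders of growth of $j(g_i\tau)$, $\xi(g_i\tau)$ and $\chi(g_i\tau)$ at the cusp; these are governed by the determinants $\det g_i$ and the widths of the cusps $g_i(i\infty)$, and for genuinely distinct $g_i$ the dominant terms should not cancel, so the leading coefficient of the identity is nonzero --- the desired contradiction. The delicate point is precisely that $\chi^*$ is non-holomorphic, so one must track the $1/\Imm\tau$ contributions and the rational cusps $a_i/c_i$ simultaneously; and when two of the $g_i$ happen to share a determinant and cusp width their leading terms agree, so a finer comparison of subleading $q$-coefficients is needed to separate them. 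This last comparison is the technically heaviest part of the argument and is exactly where the ad-hoc estimates of Section \ref{sect:qExp} are indispensable.
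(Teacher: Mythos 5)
Your proposal proves the wrong statement. The conjecture's points are $P_i=(\chi^*(\tau_{2i-1}),\chi^*(\tau_{2i}))$ with \emph{six independent} quadratic points: both coordinates are $\chi^*$-values, at arguments that need not be related to each other. Such a triple is not a $\pi$-special point of $\mathbb{C}^6$ --- a $\pi$-special point has each block of the form $(j(\tau),\chi^*(\tau))$ for a single $\tau$ --- so your reduction to Theorem \ref{thrm:AOforPi} on the collinearity hypersurface in $(\mathbb{C}^2)^3$ sets up the proof of Theorem \ref{thrm:collinearPiPoints} (which the paper proves by exactly this route), not of Conjecture \ref{conj:collinearChiPoints}. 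The correct frame is $\mathbb{C}^{12}$, or equivalently six independent $\chi$-maps $f_1,f_2,f_3,g_1,g_2,g_3$ as in Section \ref{sect:collinear} of the paper, and the case analysis is much larger because the two coordinates of a point are no longer tied together as a consistent $(j,\chi)$-pair. The clearest symptom of the mismatch: you never use the hypothesis that the line is not horizontal, vertical, or the diagonal. Without that hypothesis the statement is false --- triples $(\chi^*(\tau),\chi^*(\tau))$ give infinitely many pairwise distinct collinear points on $x=y$, and triples $(\chi^*(\tau),b)$ with $b$ a fixed $\chi^*$-special value do the same on a horizontal line --- so in the correct setting your claimed classification (that every positive-dimensional special subvariety of the collinearity variety lies in some locus $\{P_i=P_j\}$) is simply wrong: the special subvarieties corresponding to the excluded lines are counterexamples, and the whole content of the conjecture is to show they are the \emph{only} ones.

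Second, even in the cases your sketch does address, the endgame is a gap, and it is exactly the gap that keeps this statement a conjecture in the paper rather than a theorem. You argue that ``for genuinely distinct $g_i$ the dominant terms should not cancel.'' The analytically easy cases are indeed those where the levels differ; the genuinely hard cases are those where the leading terms \emph{do} cancel (equal levels and twists, or constant maps appearing in the triple), and there the identity degenerates into linear relations over $\overline{\mathbb{Q}}$ among $\chi^*$-special points, $q$-expansion coefficients and roots of unity. Disposing of these is an arithmetic problem, not an analytic one: in the Bilu--Luca--Masser argument it requires their Lemmas 5.2 and 5.5, which rest on the theorem of Allombert--Bilu--Pizarro-Madariaga that singular moduli which are linearly dependent over $\mathbb{Q}$ have degree at most $2$. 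No $\chi^*$-analogue of that result is currently known; the paper identifies precisely this as the missing ingredient, and suggests that the extra identity (\ref{eqn:XiqExpMatrix}) coming from the nonholomorphic part might eventually substitute for it. Your proposal supplies neither, so the finer ``comparison of subleading coefficients'' you defer to the estimates of Section \ref{sect:qExp} cannot be carried out by those estimates alone, and the proof does not close.
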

This should very much be a tractable problem.  Indeed by emulating Bilu-Luca-Masser's proof of \ref{thrm:collinearJPoints}, one can get a long way towards a proof of \ref{conj:collinearChiPoints}.  Unfortunately, there remains a gap, which while apparently quite surmountable, the author has not found the time to work through.  The gap lies in the fact that the  Bilu-Luca-Masser approach relies on a particular result of Allombert, Bilu and Pizarro-Madariaga \cite[Theorem 1.2]{Allombert2015}.  Without a suitable analogue of this, two crucial lemmas from \ref{thrm:collinearJPoints} lack suitable analogues in the $\chi^*$ setting.    

One might be able to use the ``multiplicity of $q$-expansions'' contained in $\chi^*$ to circumvent the need for the missing result.  I will briefly discuss the state of my approaches towards Conjecture \ref{conj:collinearChiPoints} at the end of Section \ref{sect:collinear}; with luck, a future version of this paper might contain a complete proof.

\begin{notation}
	Throughout, we will use $\mathbb{F}$ to refer to (the closure of) the standard fundamental domain for the action of $\slz$ on $\uh$, namely:
	\[\mathbb{F}=\left\{z\in\mathbb{C}: |z| \geq 1, -\frac{1}{2}\leq\operatorname{Re} z\leq\frac{1}{2}\right\}.\]
\end{notation}

\section{Bounds on $q$-expansions and Effective Andr\'e-Oort for $\pi$}\label{sect:qExp}
The goal of most of this section is to carry out the explicit $q$-expansion calculations which form much of the basis for the effective results to come.  We will at the end of the section use these to prove our effective Andr\'e-Oort result \ref{thrm:EffectivePiAO} for $\pi$, which is the easiest of the results in this paper.

The bounds we need are on the tails of the relevant $q$-expansions; we wish to show that the first term in each $q$-expansion is the main contributor to the total value of the expansion.  The $q$-expansions of $\chi$, $\xi$ and $j$ all begin with $q^{-1}$, so we will write:
\[j = q^{-1}+\widehat{j},\qquad \chi = q^{-1}+\widehat{\chi},\qquad \xi = q^{-1}+\widehat{\xi}.\]
We wish to estimate $|\widehat{\chi}|$ and $|\widehat{\xi}|$, aiming for some analogue of known facts about $\widehat{j}$; it was proven by Bilu-Masser-Zannier in \cite{Bilu2013} that $|\widehat{j}|\leq 2079$ for all $\tau \in \mathbb{F}$.  We'll be getting analogues of this fact for $\widehat{\chi}$ and $\widehat{\xi}$, working from first principles starting with the known $q$-expansions of Eisenstein series.  As with $\widehat{j}$,  we'll be able to get much better bounds when $\Imm\tau \geq 2$, which will be useful later, so we will distinguish cases based on the size of $\Imm\tau$.

\bigskip

For all $n\geq 3$, Robin \cite{Robin1984} proved that
\[\sigma(n) < e^\gamma n\log\log n + \frac{0.6483n}{\log\log n}. \]
For $n\geq 4$ we can rewrite this as the more manageable
\[\sigma(n) < 8 n\log\log n.\]
Even better, for $n\geq 6$ we get
\[\sigma(n)< 4n\log\log n.\]
It trivially follows that for $n\geq 6$:
\[\sigma_3(n) < 64n^3(\log\log n)^3,\]
\[\sigma_5(n) < 1024n^5(\log\log n)^5,\]
and one can check by hand that in fact the above two inequalities hold for $n=4$ and $5$ as well.

Using these, we can get bounds on the tails of the $q$-expansions of the Eisenstein series $E_2$, $E_4$, $E_6$.  First note that for $\tau\in\mathbb{F}$,
\[|e^{2\pi i \tau}| = |e^{-2\pi \Imm\tau}| \leq e^{-\pi\sqrt{3}} < 0.005. \]
Using this we see that:
\begin{align*}|(E_2(\tau)-1)/q|&\leq 24  \left(1 + 0.015 + 0.0001 + 200\sum_{n\geq 4} 8n\log\log n 0.005^n\right)\\&< 24\left(1.016+1600\sum_{n\geq 4} n 0.0055^n\right) \\&<  24 \times 1.017\\&<25,
\\|(E_4(\tau)-1)/q|&\leq 240  \left(1 + 0.045 + 0.001  + 200\sum_{n\geq 4} 64n^3(\log\log n)^3 0.005^n\right)\\&< 240\left(1.046+12800\sum_{n\geq 4} n^3 0.0067^n\right) \\&<  240 \times 1.048\\&<252,
\\|(E_6(\tau)-1)/q|&\leq 504 \left(1 + 0.165 + 0.007 + 200\sum_{n\geq 4} 1024n^5(\log\log n)^5 0.005^n\right)\\&< 504\left(1.172+204800\sum_{n\geq 4} n^5 0.0081^n\right) \\&<  504 \times 2.172\\&=1095.
\end{align*}
In each case we are using standard methods to evaluate the infinite sum and also using the fact that, for all $n$, $1.1^n > \log\log n$. 

Now we can begin work on $\widehat{\chi}$ and $\widehat{\xi}$, aiming first to achieve some strong bounds holding only for certain $\tau$.

\begin{proposition}\label{propn:strongerChiBounds}
	For $\Imm\tau\geq 2$, we have
	\[|\widehat{j}|\leq 1193,\qquad|\widehat{\chi}|\leq 4808,\qquad\text{and}\qquad|\widehat{\xi}|\leq 4782.\]
	\begin{proof}
		The fact for $\widehat{j}$ is due to K\"uhne, who proved it in \cite{Kuehne2012}.  The claims for $\chi$ and $\xi$ will require a little work.
		
		We first need to find a suitable lower bound on $\Delta$.  More specifically, we need to find an effective constant $c$ such that
		\[\left|\frac{\Delta - q}{q^2}\right| < c\]
		for all $\tau\in\mathbb{F}$.   My somewhat crude method uses the fact that $|\widehat{j}|\leq 1193$, from which it follows immediately that
		\[|1-jq|<1193|q|,\]
		which, for $\Imm\tau\geq 2$, is bounded above by $0.01$, whence 
		\begin{equation}\label{eqn:jqBound}|(jq)^{-1}|<(0.99)^{-1}<1.011.\end{equation}
		
		We also know that for $\tau\in\mathbb{F}$,
		\begin{multline}\label{eqn:e4cubedBound}\left|\frac{E_4^3-1}{q}\right|=\left|\frac{\left(1+q\frac{E_4-1}{q}\right)^3-1}{q}\right|=\left|3\frac{E_4-1}{q}+3q\left(\frac{E_4-1}{q}\right)^2+q^2\left(\frac{E_4-1}{q}\right)^3\right|\\<756+953+401=2110.\end{multline}
		
		We can write
		\[\frac{\Delta - q}{q^2} = \frac{\frac{E_4^3}{j}-q}{q^2} = \frac{1}{jq}\left(\frac{E_4^3-jq}{q}\right),\]
		whence
		\[\left|\frac{\Delta-q}{q^2}\right| \leq \left|\frac{1}{jq}\right|\left|\frac{E_4^3-1}{q}\right|+\left|\frac{1}{jq}\right||j-q^{-1}|<\left|\frac{1}{jq}\right|\left(\left|\frac{E_4^3-1}{q}\right|+1193\right).\]
		Combining this with (\ref{eqn:jqBound}) and (\ref{eqn:e4cubedBound}) yields
		\begin{equation}\label{eqn:DiscBound}\left|\frac{\Delta - q}{q^2}\right|< 1.011\times(2110+1193) < 3340.\end{equation}
		
		\bigskip
		
		By direct calculation we can see that
		\begin{multline}\label{eqn:firstChiBound}|\widehat{\chi}|=|\chi-q^{-1}|=\left|\frac{\left(1+q\frac{E_2-1}{q}\right)\left(1+q\frac{E_4-1}{q}\right)\left(1+q\frac{E_6-1}{q}\right)-\left(1+q\frac{\Delta-q}{q^2}\right)}{q\left(1+q\frac{\Delta-q}{q^2}\right)}\right|
		\\\leq\left|\frac{1}{1+q\frac{\Delta-q}{q^2}}\right|\times\left(\left|\frac{\Delta - q}{q^2}\right|+\left|\frac{E_2-1}{q}\right|+\left|\frac{E_4-1}{q}\right|+\left|\frac{E_6-1}{q}\right|\right.+\\\left.\left|q\frac{E_2-1}{q}\frac{E_4-1}{q}\right|+\left|q\frac{E_2-1}{q}\frac{E_6-1}{q}\right|+\left|q\frac{E_4-1}{q}\frac{E_6-1}{q}\right|+\left|q^2\frac{E_2-1}{q}\frac{E_4-1}{q}\frac{E_6-1}{q}\right|\right),\end{multline}
		which, using (\ref{eqn:DiscBound}) is bounded above by
		\[1.02\times(3340+25+252+1095+0.1+0.1+1+0.1)<4808.\]
		
		Similarly:
		\begin{align*}|\widehat{\xi}|=|\xi-q^{-1}|&\leq\left|\frac{1}{1+q\frac{\Delta-q}{q^2}}\right|\times\left(\left|\frac{\Delta - q}{q^2}\right|+\left|\frac{E_4-1}{q}\right|+\left|\frac{E_6-1}{q}\right|+\left|q\frac{E_4-1}{q}\frac{E_6-1}{q}\right|\right) \\&<1.02\times(3340+252+1095+1)<4782.\end{align*}

	\end{proof}
\end{proposition}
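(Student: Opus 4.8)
The plan is to write each of $\widehat\chi=\chi-q^{-1}$ and $\widehat\xi=\xi-q^{-1}$ as a single rational expression in $q$ whose numerator and denominator are each a leading term plus a controllably small correction, and then to estimate everything by the triangle inequality using the Eisenstein tail bounds $|(E_2-1)/q|<25$, $|(E_4-1)/q|<252$ and $|(E_6-1)/q|<1095$ established above. Since $\Delta$ appears in the denominator of both $\chi$ and $\xi$, and since $\Delta$ vanishes to first order at the cusp, the heart of the matter is to bound $\Delta$ away from zero; concretely, I want an effective constant $c$ with $|(\Delta-q)/q^2|<c$ when $\Imm\tau\geq2$.

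To obtain such a $c$ I would avoid the unwieldy direct $q$-expansion of $\Delta$ and instead use the identity $\Delta=E_4^3/j$. The hypothesis $\Imm\tau\geq2$ makes $|q|$ extremely small, so the given bound $|\widehat j|\leq1193$ turns $|1-jq|<1193|q|$ into something below $0.01$, giving $|(jq)^{-1}|<1.011$. Expanding $(E_4^3-1)/q$ through $(1+x)^3-1=3x+3x^2+x^3$ with $x=q\cdot(E_4-1)/q$ and inserting the $E_4$ bound yields $|(E_4^3-1)/q|<2110$. Writing $(\Delta-q)/q^2=(jq)^{-1}((E_4^3-jq)/q)$ and separating the numerator into the $E_4^3-1$ and $1-jq$ contributions then delivers $|(\Delta-q)/q^2|<1.011\times(2110+1193)<3340$.

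With this estimate secured, the finishing steps are routine. For $\chi=E_2E_4E_6/\Delta$ I would substitute $E_k=1+q\cdot(E_k-1)/q$ and $\Delta=q(1+q\cdot(\Delta-q)/q^2)$, so that $\widehat\chi$ becomes a quotient whose denominator $1+q\cdot(\Delta-q)/q^2$ has reciprocal at most $1.02$ (again because $|q|$ is tiny) and whose numerator is a sum of the $\Delta$ term, the single-Eisenstein terms, and higher cross terms. The triangle inequality, with the small powers of $q$ annihilating the cross terms, gives $|\widehat\chi|<1.02\times(3340+25+252+1095+\varepsilon)<4808$ for a negligible $\varepsilon$; the computation for $\xi=E_4E_6/\Delta$ is the same but drops the $E_2$ factor, yielding $|\widehat\xi|<1.02\times(3340+252+1095+\varepsilon)<4782$.

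The main obstacle, and essentially the only nontrivial idea, is the lower bound on $\Delta$: a crude estimate of $1/\Delta$ near the cusp is delicate because of the simple zero, and the argument succeeds precisely by rerouting through $j$ so that K\"uhne's bound on $\widehat j$ does the work. Once $|(\Delta-q)/q^2|<3340$ is in place, the remaining bounds for $\widehat\chi$ and $\widehat\xi$ follow by mechanical term-by-term estimation.
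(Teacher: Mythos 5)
Your proposal is correct and follows essentially the same route as the paper: the identity $\Delta=E_4^3/j$ combined with K\"uhne's bound $|\widehat{j}|\leq 1193$ to obtain $|(\Delta-q)/q^2|<3340$, followed by term-by-term triangle-inequality estimation of $\widehat{\chi}$ and $\widehat{\xi}$ using the Eisenstein tail bounds, with all the same intermediate constants. No discrepancies worth noting; indeed your restriction of the $\Delta$ bound to $\Imm\tau\geq 2$ is, if anything, a slightly more careful statement of the hypothesis actually used.
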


For the remainder of the $\tau\in\mathbb{F}$, the bounds we can get are not as good.

\begin{proposition}\label{propn:weakerChiBounds}
	For all $\tau\in\mathbb{F}$,
	\[\left|\widehat{\chi}\right| < 39960\qquad\text{and}\qquad|\widehat{\xi}| < 39032.\]
	\begin{proof}
		This divides into two steps: first we'll get such bounds for $\Imm\tau \geq 1.5$, then for the remainder of the region.
		
		For $\Imm\tau\geq 1.5$, we proceed exactly as above.  We have $|1-jq|<1193|q|$, which for $\Imm\tau\geq 1.5$ is bounded above by 0.1, so as in (\ref{eqn:DiscBound}):
		\[\left|\frac{\Delta-q}{q^2}\right|< 1.12\times(2110+1193)<3700.\]
		It follows as for (\ref{eqn:firstChiBound}) that when $\Imm\tau\geq 1.5$,
		\[|\widehat{\chi}| \leq 1.43\times(3700+25+252+1095+1+3+28+1)<7299\]
		and similarly
		\[|\widehat{\xi}| < 7258.\]

		\bigskip

		For the remainder of the $\tau\in\mathbb{F}$ (ie. those with $\frac{\sqrt{3}}{2}\leq\Imm\tau<1.5$) we use a different technique.  Recalling that $\Delta = q\prod_{n=1}^{\infty}(1-q^n)^{24}$ and noting that in the desired region $|q^{-1}|\leq 12392$ and $|q|<0.005$, we see:
		\begin{align*}
		\left|\frac{\Delta - q}{q^2}\right| = \left|\left(q^{-1}- 1\right)(1-q)^{23}\prod_{n=2}^{\infty}(1-q^n)^{24} - q^{-1}\right|\leq \left|12393\times 0.9\right|+12392 < 23546.
		\end{align*}
		Also we have
		\begin{align*}\left|\frac{1}{1+q\frac{\Delta - q}{q^2}}\right| = \left|\frac{q}{\Delta}\right|=\left|\frac{1}{\prod_{n=1}^{\infty}(1-q^n)^{24}}\right|&<\left|\frac{1}{\prod_{n=1}^{100}(1-0.005^n)\prod_{n=101}^{\infty}(1-n^{-2})}\right|^{24}\\&<\left(\frac{1}{0.994\times\frac{101}{102}}\right)^{24}<1.5.\end{align*}
		In much the same way as for (\ref{eqn:firstChiBound}), we then get
		\[\left|\widehat{\chi}\right| < 1.5\times (23546 + 25+252+1095+32+137+1380+173) = 39960\]
		and
		\[|\widehat{\xi}| < 39032,\]
		as required.		
	\end{proof}
\end{proposition}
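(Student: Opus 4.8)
The plan is to split $\mathbb{F}$ according to the size of $\Imm\tau$ and to track exactly where the argument of Proposition~\ref{propn:strongerChiBounds} survives. That argument rested on two facts: an upper bound on $|(\Delta-q)/q^2|$, obtained in~(\ref{eqn:DiscBound}) from the observation that $jq$ is close to $1$ (so that $|(jq)^{-1}|$ is controlled, as in~(\ref{eqn:jqBound})), and the smallness of the cross terms in the expansion~(\ref{eqn:firstChiBound}) coming from the smallness of $|q|$. As $\Imm\tau$ decreases both of these deteriorate: $|q|$ grows, and $jq$ need no longer be close to $1$. So I expect the natural break points to be where $1193|q|$ crosses a convenient threshold and, more seriously, where the $j$-based estimate for $\Delta$ collapses entirely.

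First I would treat the range $\Imm\tau\geq 1.5$, where the method of Proposition~\ref{propn:strongerChiBounds} still applies essentially verbatim, only with worse constants. Here $|1-jq|<1193|q|$ is no longer at most $0.01$ but is still bounded by roughly $0.1$, so $|(jq)^{-1}|$ is bounded by something near $1.1$ and the analogue of~(\ref{eqn:DiscBound}) gives an upper bound on $|(\Delta-q)/q^2|$ of the order of $3700$. The one real change is that, since $|q|$ is now not as tiny, the cross terms $|q\tfrac{E_2-1}{q}\tfrac{E_4-1}{q}|$ and so on in~(\ref{eqn:firstChiBound}) are no longer negligible; I would carry each of them explicitly using the Eisenstein tail bounds $25$, $252$ and $1095$ computed above. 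This produces intermediate bounds on $|\widehat{\chi}|$ and $|\widehat{\xi}|$ of the order of a few thousand.

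The genuinely different region is $\tfrac{\sqrt3}{2}\leq\Imm\tau<1.5$, where $jq$ is no longer forced to be near $1$ and the route through~(\ref{eqn:jqBound}) fails. Here I would discard $j$ altogether and bound $\Delta$ directly from the product expansion $\Delta=q\prod_{n\geq1}(1-q^n)^{24}$. On this region $|q|<0.005$, so every factor $|1-q^n|$ lies within $|q|^n$ of $1$; multiplying through, both $\prod_{n\geq1}(1-q^n)^{24}$ and its reciprocal are pinned close to $1$, which yields an upper bound on $|(\Delta-q)/q^2|$ and, crucially, on the prefactor $|q/\Delta|=\big|1/(1+q(\Delta-q)/q^2)\big|$ appearing in~(\ref{eqn:firstChiBound}). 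The new feature is that $|q^{-1}|$ is now large, up to about $e^{3\pi}\approx 12392$, so the resulting bound on $|(\Delta-q)/q^2|$ is an order of magnitude larger than before; feeding this and the Eisenstein tail bounds into~(\ref{eqn:firstChiBound}) and its $\xi$-analogue gives the claimed constants $39960$ and $39032$, which dominate those from the $\Imm\tau\geq1.5$ range.

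The main obstacle is the estimate of this infinite product on the low-$\Imm\tau$ strip. Because $\operatorname{Re}\tau$ ranges over all of $[-\tfrac12,\tfrac12]$, the argument of $q$ is unconstrained --- in particular $q$ can be negative real at the vertical edges of $\mathbb{F}$ --- so the individual factors $|1-q^n|$ may exceed $1$, and one cannot naively bound the product by $1$. I would therefore control $\prod_{n\geq2}(1-q^n)^{24}$ by comparison with $\prod(1+|q|^n)^{24}$ and $\prod(1-|q|^n)^{24}$, splitting off the single most dangerous factor $(1-q)^{24}$ and bounding the convergent tail crudely but explicitly, for instance by grouping the first several terms and comparing the remainder with $\prod(1-n^{-2})$. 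Once that one product estimate is secured, the rest is entirely mechanical arithmetic.
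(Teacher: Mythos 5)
Your proposal follows the paper's own proof essentially step for step: the same split at $\Imm\tau=1.5$, the same reuse of the Proposition~\ref{propn:strongerChiBounds} argument with the weaker bound $|1-jq|<0.1$ on the upper region, and the same switch to the product expansion $\Delta=q\prod_{n\geq1}(1-q^n)^{24}$ (with $|q|<0.005$, $|q^{-1}|\leq 12392$, and a tail comparison against $\prod(1-n^{-2})$) on the strip $\tfrac{\sqrt3}{2}\leq\Imm\tau<1.5$. If anything, your explicit handling of the fact that the factors $|1-q^n|$ can exceed $1$ is more careful than the paper's own arithmetic at that point, so the plan is sound.
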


We'll use all of the above propositions in the calculations to come, but in most cases it will make more sense to use the better bounds known for $\Imm\tau\geq 2$, ie. Proposition \ref{propn:strongerChiBounds}.  The main purpose for getting Proposition \ref{propn:weakerChiBounds}  was to yield the following lemmas. 

As with many of the calculations in this paper, the first of these lemmas is taken essentially verbatim from Lemma 5.1 of \cite{Bilu2017}, in light of the calculations above.

\begin{lemma}\label{lma:QuadraticSizeComparison}
	Let $x=\chi^*(\tau)$ be a $\chi^*$-special point and $x'=\chi^*(\tau')$ the principal $\chi^*$-special point of the same discriminant.  Assume (without any loss) that $\tau$ and $\tau'$ are each in $\mathbb{F}$.  Then either $\tau=\tau'$ or $|x'|>|x|+5595$.
	\begin{proof}
		Let $D$ be the common discriminant of $x$ and $x'$.  We may assume that $|D|\geq 15$, otherwise $h(D)=1$ and there is nothing to prove.  We'll assume that $\tau\ne \tau'$.
		
		Since $\tau'$ is principal and $\tau$ is non-principal, it follows that
		\[\Imm\tau' = \frac{\sqrt{|D|}}{2}\qquad\text{and}\qquad\Imm\tau \leq \frac{\sqrt{|D|}}{4}.\]
		Therefore
		\begin{align*}|x'|=|\chi^*(\tau')|&=\left|q^{-1}\left(1-\frac{3}{\pi \Imm\tau'}\right) + \widehat{\chi}-\frac{3}{\pi \Imm\tau'}\widehat{\xi}\right|\\&\geq|q^{-1}|\left(1-\frac{6}{\pi\sqrt{15}}\right)-4808 - \frac{6}{\pi\sqrt{15}}\times 4782\qquad\text{using Proposition \ref{propn:strongerChiBounds}.}\\&\geq e^{\pi\sqrt{|D|}}\times 0.5-7167\end{align*}
		
		On the other hand 
		\begin{align*}
			|x|=|\chi^*(\tau)|&\leq |q^{-1}|\left|1-\frac{3}{\pi\Imm\tau}\right|+39960 + \frac{6}{\pi\sqrt{3}}\times 39032\qquad\text{using Proposition \ref{propn:weakerChiBounds}.}\\&\leq |q^{-1}|+39960 + 43039\qquad\text{since }\Imm\tau\geq \sqrt{3}/2\text{ and }|1-6/\pi\sqrt{3}|<1.\\&\leq e^{\pi\sqrt{|D|}/2} + 82999.
		\end{align*}
		So 
		\[|x'|-|x| \geq e^{\pi\sqrt{|D|}}\times 0.5- e^{\pi\sqrt{|D|}/2} - 82999-7167 \geq 0.5e^{\pi\sqrt{15}}-e^{\pi\sqrt{15}/2}-90166 > 5595, \]
		as required.
	\end{proof}
\end{lemma}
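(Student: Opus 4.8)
The plan is to exploit the fact that, up to a bounded tail, $\chi^*$ is governed by its leading term $q^{-1}\bigl(1-\tfrac{3}{\pi\Imm\tau}\bigr)$, and that the principal point sits far higher in $\uh$ than any competing point of the same discriminant, so that this leading term is exponentially larger for $x'$ than for $x$. Writing $y=\Imm\tau$ and using $\chi=q^{-1}+\widehat\chi$, $\xi=q^{-1}+\widehat\xi$, I would first record the clean expansion
\[\chi^*(\tau)=q^{-1}\left(1-\frac{3}{\pi y}\right)+\widehat\chi-\frac{3}{\pi y}\widehat\xi,\]
so that $|q^{-1}|=e^{2\pi y}$ is the dominant quantity and every remaining term is controlled by Propositions \ref{propn:strongerChiBounds} and \ref{propn:weakerChiBounds}. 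Before estimating anything I would dispose of small discriminants: every negative discriminant with $|D|\leq 14$ has class number one, so the only reduced point is the principal one and $\tau=\tau'$, leaving nothing to prove. Hence I may assume $|D|\geq 15$ and $\tau\neq\tau'$.

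The geometric heart of the argument is the gap in imaginary parts. Reducing the relevant forms into $\mathbb{F}$, a point of discriminant $D$ attached to a form with leading coefficient $a$ has $y=\sqrt{|D|}/(2a)$; the principal form has $a=1$, giving $\Imm\tau'=\sqrt{|D|}/2$, while any non-principal reduced form has $a\geq 2$, giving $\Imm\tau\leq\sqrt{|D|}/4$. Consequently $|q'^{-1}|=e^{\pi\sqrt{|D|}}$ whereas $|q^{-1}|\leq e^{\pi\sqrt{|D|}/2}$, an exponential separation which I expect to swamp all the additive constants.

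With this in hand the two estimates are routine. For the principal point $y'=\sqrt{|D|}/2$ is large, so Proposition \ref{propn:strongerChiBounds} applies and, since $1-\tfrac{3}{\pi y'}$ stays above $\tfrac12$, I would bound
\[|x'|\geq |q'^{-1}|\left(1-\frac{3}{\pi y'}\right)-|\widehat\chi|-\frac{3}{\pi y'}|\widehat\xi|\geq \tfrac12 e^{\pi\sqrt{|D|}}-7167.\]
For the competing point $y$ can be as small as $\sqrt3/2$, so I must use the weaker Proposition \ref{propn:weakerChiBounds}; using that $|1-\tfrac{3}{\pi y}|<1$ throughout $\mathbb{F}$ gives
\[|x|\leq |q^{-1}|+|\widehat\chi|+\frac{3}{\pi y}|\widehat\xi|\leq e^{\pi\sqrt{|D|}/2}+82999.\]
Subtracting, $|x'|-|x|\geq \tfrac12 e^{\pi\sqrt{|D|}}-e^{\pi\sqrt{|D|}/2}-90166$, whose right-hand side is increasing in $|D|$ and so is minimised at the smallest admissible discriminant; evaluating there yields the claimed constant $|x'|-|x|>5595$.

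The one delicate point, and the place I expect to have to be careful, is precisely the boundary discriminant $|D|=15$, which is where the right-hand side is smallest and hence pins down the constant. There $\Imm\tau'=\sqrt{15}/2$ sits just below $2$, so Proposition \ref{propn:strongerChiBounds} is not literally available for the lower bound on $|x'|$, and the weaker bounds are far too lossy to close the gap at this single value. I would handle this by treating $|D|=15$ (class number two, with a single non-principal reduced form of leading coefficient $a=2$) separately, either by re-running the tail estimate of Proposition \ref{propn:strongerChiBounds} with the threshold lowered slightly below $\Imm\tau=2$ — where $|q|$ is still of size $e^{-12}$ and the bounds are only marginally weaker — or by evaluating $|x'|$ and $|x|$ directly at the two explicit points. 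For all $|D|\geq 16$ one has $y'\geq 2$ and the clean argument above goes through verbatim, with an even larger margin.
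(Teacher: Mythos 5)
Your proposal is essentially the paper's own proof: the same decomposition of $\chi^*$ into leading term $q^{-1}\left(1-\frac{3}{\pi\Imm\tau}\right)$ plus controlled tails, the same imaginary-part gap $\Imm\tau'=\sqrt{|D|}/2$ versus $\Imm\tau\leq\sqrt{|D|}/4$, Proposition \ref{propn:strongerChiBounds} on the principal side, Proposition \ref{propn:weakerChiBounds} on the non-principal side, and the same final comparison, minimised at $|D|=15$.

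The one point where you diverge is the ``delicate point'' you flag at $|D|=15$, and you are right to flag it: it is a genuine (if reparable) gap in the paper's own argument, which the paper passes over silently. The paper invokes Proposition \ref{propn:strongerChiBounds} at $\tau'$ for every $|D|\geq 15$, but at the extremal discriminant $|D|=15$ one has $\Imm\tau'=\sqrt{15}/2\approx 1.94<2$, strictly outside that proposition's stated hypothesis; and $|D|=15$ is exactly the case that pins down the constant $5595$, so it cannot be discarded. Your first proposed repair is the right one: the proof of Proposition \ref{propn:strongerChiBounds} only needs $1193|q|<0.01$, and since $e^{-\pi\sqrt{15}}\approx 5.2\times 10^{-6}$ one still has $1193|q|<0.007$ at $\Imm\tau\geq\sqrt{15}/2$, so rerunning that proof with the lowered threshold reproduces the constants $4808$ and $4782$ verbatim. (The only caveat is that the input bound $|\widehat{j}|\leq 1193$, quoted from K\"uhne for $\Imm\tau\geq 2$, must also be valid slightly below $2$; the paper itself already uses it down to $\Imm\tau\geq 1.5$ in the proof of Proposition \ref{propn:weakerChiBounds}.) Your alternative repair, direct evaluation at the two reduced points of discriminant $-15$, also works. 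Note that $|D|=15$ is the only discriminant affected: the next discriminant with class number exceeding $1$ is $|D|=20$, where $\Imm\tau'=\sqrt{5}>2$. So your write-up, with the boundary case handled, is in fact more complete than the paper's as literally written.
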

The above will be useful for the next section, but it also allows us to resolve an old question about the degree of $\chi^*(\tau)$, which was discussed in \cite{Spence2017Ext} and in \cite{Spence2016}.
\begin{corollary}\label{cor:FieldEquality}
	For quadratic $\tau$, $\mathbb{Q}(j(\tau))=\mathbb{Q}(\chi^*(\tau))$.
	\begin{proof}
		For a given discriminant $D$, let $S_D$ be the set of $\tau\in\mathbb{F}$ having discriminant $D$.  The class polynomial 
		\[H_j(X) = \prod_{\tau\in S_D}(X-j(\tau))\in\mathbb{Q}(X)\]
		is known to be irreducible over $\mathbb{Q}$.
		
		It is a fact due to Masser that for quadratic $\tau$, $\mathbb{Q}(\chi^*(\tau))\suq\mathbb{Q}(j(\tau))$.  Proposition 5.2 of \cite{Spence2016} uses work of Masser to show further that for quadratic $\tau$, if $\theta$ is a field automorphism acting on $\mathbb{Q}(j(\tau))$ so that $\theta(j(\tau))=j(\tau')$, then also $\theta(\chi^*(\tau))=\chi^*(\tau')$.
		
		From this and the irreducibility of $H_j(X)$, it follows that the class polynomial
		\[H_{\chi^*}(X)=\prod_{\tau\in S_D}(X-\chi^*(\tau))\] 
		takes the form $p(X)^k$ for some polynomial $p$ irreducible over $\mathbb{Q}$.
		
		Now let $\tau'\in S_D$ be principal.  If $k>1$, then it follows that there is a non-principal $\tau\in S_D$ such that $\chi^*(\tau)=\chi^*(\tau')$, which contradicts Lemma \ref{lma:QuadraticSizeComparison}.
		
		So $H_{\chi^*}(X)$ is irreducible, whence $[\mathbb{Q}(\chi^*(\tau)):\mathbb{Q}]=[\mathbb{Q}(j(\tau)):\mathbb{Q}]$, so that indeed $\mathbb{Q}(\chi^*(\tau))=\mathbb{Q}(j(\tau))$.
	\end{proof}
\end{corollary}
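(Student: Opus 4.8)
The plan is to prove the two fields have equal degree over $\mathbb{Q}$, since Masser's result \cite{Masser1975} already gives the inclusion $\mathbb{Q}(\chi^*(\tau))\subseteq\mathbb{Q}(j(\tau))$; equality of degree will then force equality of fields. First I would fix the discriminant $D$ of $\tau$ and let $S_D$ be the set of points of $\mathbb{F}$ of discriminant $D$, so that $|S_D|=h(D)=[\mathbb{Q}(j(\tau)):\mathbb{Q}]$ and the classical $j$-class polynomial $H_j(X)=\prod_{\tau\in S_D}(X-j(\tau))$ is irreducible over $\mathbb{Q}$. I would then introduce the analogous $\chi^*$-class polynomial $H_{\chi^*}(X)=\prod_{\tau\in S_D}(X-\chi^*(\tau))$ and aim to show it is irreducible; its degree is $h(D)$, so irreducibility would immediately give $[\mathbb{Q}(\chi^*(\tau)):\mathbb{Q}]=h(D)$ and close the argument.

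The main structural input is the Galois-equivariance of $\chi^*$ established in \cite{Spence2016}: any automorphism $\theta$ carrying $j(\tau)$ to $j(\tau')$ also carries $\chi^*(\tau)$ to $\chi^*(\tau')$. Because $H_j$ is irreducible, the absolute Galois group acts transitively on $\{j(\tau):\tau\in S_D\}$, hence (through the bijective identification $\tau\leftrightarrow j(\tau)$, valid since $j$ is injective on $\mathbb{F}$) on $S_D$ itself; the equivariance then transports this to a transitive action on the value set $\{\chi^*(\tau):\tau\in S_D\}$. This shows at once that $H_{\chi^*}$ has rational coefficients and that its distinct roots are mutually conjugate, so they share a single minimal polynomial $p$ irreducible over $\mathbb{Q}$; moreover transitivity forces every distinct value to occur with one common multiplicity $k$. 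I therefore expect to reach the factorisation $H_{\chi^*}(X)=p(X)^k$.

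It then remains only to rule out $k>1$, and this is where the quantitative work pays off. If $k>1$ then some $\chi^*$-value is repeated, and by transitivity I may arrange that the repeated value is that of a principal form $\tau'\in S_D$, so there is a non-principal $\tau\in S_D$ with $\chi^*(\tau)=\chi^*(\tau')$. But Lemma \ref{lma:QuadraticSizeComparison} gives $|\chi^*(\tau')|>|\chi^*(\tau)|+5595$, so these two values are distinct --- a contradiction. Hence $k=1$, the polynomial $H_{\chi^*}=p$ is irreducible of degree $h(D)$, and combining with Masser's inclusion yields $\mathbb{Q}(\chi^*(\tau))=\mathbb{Q}(j(\tau))$.

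The genuinely hard part is not the Galois bookkeeping, which is formal, but the exclusion of collisions $\chi^*(\tau)=\chi^*(\tau')$ among forms of a common discriminant: a priori the nonholomorphic correction term in $\chi^*$ could conspire with the holomorphic part to make two distinct special points coincide, and it is precisely Lemma \ref{lma:QuadraticSizeComparison} that forbids this. So in effect the whole corollary rests on the size separation proven there, which itself depends on the explicit tail bounds of Propositions \ref{propn:strongerChiBounds} and \ref{propn:weakerChiBounds}; I would flag this collision-exclusion as the crux of the matter.
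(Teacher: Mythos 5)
Your proposal is correct and follows essentially the same route as the paper's own proof: Masser's inclusion plus the Galois equivariance from \cite{Spence2016} to get $H_{\chi^*}(X)=p(X)^k$, then Lemma \ref{lma:QuadraticSizeComparison} to rule out $k>1$ via a collision between the principal point and a non-principal one. Your identification of the collision-exclusion as the crux is exactly right; nothing in the argument needs changing.
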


We can also now prove our desired effective Andr\'e-Oort result for $\pi$, namely Theorem \ref{thrm:EffectivePiAO}
	\begin{proof}[Proof of Theorem \ref{thrm:EffectivePiAO}]
		For a number field $K$, let $p\in K[X,Y]$.  Suppose that $p(j(\tau),\chi^{*}(\tau))$ vanishes for some quadratic $\tau$ of discriminant $-D$.  For every quadratic $\tau'$ also having discriminant $-D$, there is a Galois automorphism (over $\mathbb{Q}$) sending $j(\tau)$ to $j(\tau')$.  Call it $\theta$.  By Proposition 5.2 of \cite{Spence2016}, we know that also $\theta(\chi^*(\tau)) = \chi^*(\tau')$, so that $\theta(p)(j(\tau'),\chi^*(\tau'))$ vanishes.  Hence, by considering $\theta(p)$ rather than $p$, we may assume that $\tau = \frac{D+\sqrt{-D}}{2}$. 
		
		We will write $h(p)$ for the maximum of the absolute logarithmic heights of the coefficients of $p$, as defined in \cite{Bombieri2006}.  Since the absolute logarithmic height is Galois invariant, we may carry out the reduction described in the previous paragraph without affecting this height.   It follows from an inequality of Liouville \cite{Zannier2009} that, if $\alpha$ is a coefficient occurring in $p$,
		\begin{equation}\label{eqn:LiouvilleHeightBound}-[K:\mathbb{Q}]h(p)\leq\log|\alpha|\leq[K:\mathbb{Q}]h(p),\end{equation}
		an inequality which we will use on a number of occasions below.
		
		\bigskip

		We can write 
		\[p(j(\tau),\chi^*(\tau)) = p\left(q^{-1}+\widehat{j}, q^{-1}+\widehat{\chi}-\frac{3}{\pi \Imm\tau}\left(q^{-1}+\widehat{\xi}\right)\right)\]
		and we get a Laurent series in $q$, whose coefficients are polynomials in $3/\Imm\tau$.  Note that there is no cancellation among the $\frac{3}{\pi\Imm\tau}$ terms.  So the leading $\frac{3}{\pi\Imm\tau}$ term of the leading term in the $q$-series comes directly from the leading $Y$ term of the leading $X$ term of $p(X,Y)$.
		
		The Laurent series therefore takes the form:
		\[q^{-\deg p}\left(A\left(\frac{3}{\pi\Imm\tau}\right)^k + p_1\left(\frac{3}{\pi\Imm\tau}\right)\right)+p_2\left(q^{-1}, \widehat{j}, \widehat{\chi},\widehat{\xi},\frac{3}{\pi\Imm\tau}\right),\]
		where:
		\begin{itemize}
			\item $A$ is one of the coefficients of $p$, so in particular \begin{equation}\label{eqn:LowerBoundOnA}|A|\geq e^{-[K:\mathbb{Q}]h(p)}.\end{equation}
			\item The degree of $p_1$ is less than $k$.
			\item The degree of $p_2(X,A,B,C,D)$ in $X$ is less then $\deg p$.
			\item Using (\ref{eqn:LiouvilleHeightBound}), the absolute values of the coefficients of the $p_i$ are bounded above by a constant which can easily be computed in terms of $h(p)$, $[K:\mathbb{Q}]$ and $\deg{p}$.
		\end{itemize}
		So we can rewrite $p(j,\chi^*)=0$ as
		\[A = -\left(\frac{3}{\pi\Imm\tau}\right)^{-k}p_1\left(\frac{3}{\pi\Imm\tau}\right)-q^{\deg p}\left(\frac{3}{\pi\Imm\tau}\right)^{-k}p_2\left(q^{-1}, \widehat{j}, \widehat{\chi},\widehat{\xi},\frac{3}{\pi\Imm\tau}\right)\]
		Provided that $\Imm\tau \geq 2$, the absolute value of the right hand side is bounded above by
		\[\left(\frac{3}{\pi\Imm\tau}\right)\cdot c_1(h(p),[K:\mathbb{Q}], \deg p) + q\cdot\left(\frac{\pi\Imm\tau}{3}\right)^{k}c_2(h(p),[K:\mathbb{Q}],\deg p, 1193, 4782, 4808),\]
		for some easily computed constants $c_1$, $c_2$.  We noted in (\ref{eqn:LowerBoundOnA}), though, that $|A|\geq e^{-[K:\mathbb{Q}]h(p)}$.  Writing $H(p)=e^{[K:\mathbb{Q}]h(p)}$, these inequalities are inconsistent if we have both:
		\[\Imm\tau > \frac{6\operatorname{H}(p)c_1}{\pi}\]
		and 
		\[e^{-2\pi\Imm\tau}\left(\frac{\pi\Imm\tau}{3}\right)^{k}c_2 < \frac{1}{2\operatorname{H}(p)}.\]
		Since $x^k<k!e^x$ for all $x$, this final condition holds provided that
		\[e^{(1-2\pi)\Imm\tau}<\frac{3^k}{2\pi^kk!\operatorname{H}(p)c_2},\]
		whence it suffices to have
		\[\Imm\tau > |1-2\pi|^{-1}\log\left(\frac{2\pi^kk!\operatorname{H}(p)c_2}{3^k}\right).\] 		
		
		So if $p(j(\tau),\chi^*(\tau))$ vanishes, it follows that the above inequalities cannot hold.  In other words, the discriminant $-D$ of $\tau$ must satisfy
		\[D\leq 4\max\left(\frac{6\operatorname{H}(p)c_1}{\pi}, |1-2\pi|^{-1}\log\left(\frac{2\pi^kk!\operatorname{H}(p)c_2}{3^k}\right) \right)^2.  \]
	\end{proof}
The observant reader will note a discrepancy between the statement of Theorem \ref{thrm:EffectivePiAO} and that of the motivating theorem for $j$, \ref{thrm:effectiveAOforJ}.  Specifically, note that there are more degrees of freedom present in Theorem \ref{thrm:effectiveAOforJ} than in \ref{thrm:EffectivePiAO}.  A more direct analogue might be something like the following:

\begin{conjecture}\label{conj:effectiveChiAO}
	Let $V\suq\mathbb{C}^2$ be an algebraic curve defined over $\mathbb{Q}$.  Then there are effectively computable constants $c_i=c_i(V)$ such that whenever $(\chi^*(\tau_1),\chi^*(\tau_2))\in V$ with quadratic $\tau_i$ and $d_i$ is the absolute value of the discriminant of $\tau_i$, either
	\[\max(d_1,d_2)\leq c_1\]
	or there is a primitive integer matrix $g$ of determinant at most $c_2$ such that $\tau_2 = g\tau_1$.
\end{conjecture}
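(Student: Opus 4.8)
The plan is to emulate the Bilu--Masser--Zannier/K\"uhne proof of Theorem \ref{thrm:effectiveAOforJ}, substituting the $q$-expansion bounds of Propositions \ref{propn:strongerChiBounds} and \ref{propn:weakerChiBounds} for their bounds on $\widehat{j}$, and exploiting Corollary \ref{cor:FieldEquality} to import the Galois-theoretic input wholesale. Write $V=\{p=0\}$ with $p\in\mathbb{Q}[x,y]$ irreducible, and set aside the degenerate case in which $V$ is a horizontal or vertical line through a $\chi^*$-special value (there one coordinate is pinned to finitely many CM values and the statement is either vacuous or requires the obvious product-type exception), so that both coordinate projections of $V$ are finite. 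Suppose $(\chi^*(\tau_1),\chi^*(\tau_2))\in V$ with $\tau_i$ of discriminant $-d_i$, and split according to whether $d_1=d_2$ or $d_1\neq d_2$.

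First I would record the structural observation that appears to dissolve the obstruction the author flags for Conjecture \ref{conj:collinearChiPoints}. By Corollary \ref{cor:FieldEquality} we have $\mathbb{Q}(\chi^*(\tau))=\mathbb{Q}(j(\tau))$ for every quadratic $\tau$, and by Proposition 5.2 of \cite{Spence2016} the Galois action is transported, in the sense that a $\mathbb{Q}$-automorphism sending $j(\tau)\mapsto j(\tau')$ also sends $\chi^*(\tau)\mapsto\chi^*(\tau')$. Consequently \emph{every} field-theoretic statement about tuples of singular moduli transfers verbatim to tuples of $\chi^*$-special values; in particular the Allombert--Bilu--Pizarro-Madariaga input \cite[Theorem 1.2]{Allombert2015} holds with $\chi^*$ in place of $j$, with no further work. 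This is precisely the missing analogue, and it supplies the freedom to conjugate the two points independently.

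For $d_1\neq d_2$ I would use this independence to select a single $\mathbb{Q}$-automorphism $\theta$ principalising both $\tau_i$ at once (up to the bounded genus-theoretic obstruction, handled as in \cite{Bilu2013}); replacing $p$ by $\theta(p)$ costs nothing since heights are Galois-invariant, exactly as in the proof of Theorem \ref{thrm:EffectivePiAO}. Then $\Imm\tau_i=\sqrt{d_i}/2$, so once $d_i\geq 16$ Proposition \ref{propn:strongerChiBounds} gives $\chi^*(\tau_i)=q_i^{-1}\bigl(1-\tfrac{3}{\pi\Imm\tau_i}\bigr)+O(1)$ with both coordinates of size $\asymp e^{2\pi\Imm\tau_i}$. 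Expanding $\theta(p)$ about its dominant monomial and using, as in Theorem \ref{thrm:EffectivePiAO}, that there is no cancellation among the $\tfrac{3}{\pi\Imm\tau_i}$ terms, one pins a single coefficient $A$ of $\theta(p)$ between the Liouville lower bound $|A|\geq e^{-h(p)}$ and an upper bound that decays once $\min(d_1,d_2)$ is large, giving an effective bound on $\min(d_1,d_2)$. The remaining mixed configuration (say $d_1$ bounded, $d_2$ large) closes by a one-variable argument: $\chi^*(\tau_1)$ takes one of finitely many values $\alpha$, the polynomial $p(\alpha,y)$ is nonzero because the projection is finite, so $\chi^*(\tau_2)$ is confined to finitely many algebraic values of bounded height, whence (after principalising $\tau_2$, legitimate since $d_1\neq d_2$) $\Imm\tau_2=\sqrt{d_2}/2$ and hence $d_2$ are effectively bounded.

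The genuine obstacle is the case $d_1=d_2=d$. Here $\tau_1,\tau_2$ lie in the same imaginary quadratic field, so $\tau_2=g\tau_1$ for some primitive $g\in\gl$; if $\det g\leq c_2$ we are in the exceptional case, so the real content is to rule out large $d$ together with large $N=\det g$ for a fixed non-special $V$. The difficulty is structural: for a common discriminant the class group acts \emph{diagonally} on $(\chi^*(\tau_1),\chi^*(\tau_2))$, so no single conjugation enlarges both imaginary parts at once and the clean two-variable domination above breaks down. Two routes suggest themselves. One is to mimic \cite{Bilu2013} and use that the special curve $C_N=\{(\chi^*(\tau),\chi^*(g\tau)):\det g=N\}$ has complexity growing with $N$, so $C_N\not\subseteq V$ once $N$ exceeds an effective bound, after which a B\'ezout-and-heights estimate controls $V\cap C_N$; the crux is making this uniform over the infinitely many large $N$. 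The other, which I find more promising and which is the author's own suggestion, is to exploit the ``multiplicity of $q$-expansions'' in $\chi^*=\chi-\frac{3}{\pi\Imm\tau}\xi$: principalising only $\tau_1$ and expanding in $q_1$ produces, at each power of $q_1$, a \emph{polynomial} identity in $(\pi\Imm\tau_1)^{-1}$ whose several coefficients must vanish simultaneously, yielding more equations than the single $q$-expansion of $j$ affords and, plausibly, enough rigidity to bound either $d$ or $N$. Turning either route into an effective estimate is, I expect, the main remaining difficulty, exactly as the author indicates.
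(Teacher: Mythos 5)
This statement is not proved in the paper at all: it is stated as an open conjecture, and the paper gives the precise reason why --- the K\"uhne/Bilu--Masser--Zannier method needs Baker's effective lower bounds for linear forms in logarithms of algebraic numbers, and the factor $\frac{3}{\pi\Imm\tau}$ in $\chi^*$ injects the transcendental number $\pi$ in a way that takes the relevant quantities outside the scope of Baker's theorem. Your proposal does not overcome this; its central analytic step is unsound. The device you borrow from the proof of Theorem \ref{thrm:EffectivePiAO} --- isolating a single coefficient $A$ because ``there is no cancellation among the $3/(\pi\Imm\tau)$ terms'' --- works there only because $j$ and $\chi^*$ are evaluated at the \emph{same} $\tau$, so there is one $q$, one $\Imm\tau$, and a well-defined leading term of a single Laurent series. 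With two independent points you have $p(\chi^*(\tau_1),\chi^*(\tau_2))=\sum a_{mn}\chi^*(\tau_1)^m\chi^*(\tau_2)^n$, and which monomial dominates depends on the ratio $\sqrt{d_1}/\sqrt{d_2}$: whenever $m\sqrt{d_1}+n\sqrt{d_2}$ and $m'\sqrt{d_1}+n'\sqrt{d_2}$ are equal or close for distinct monomials, several terms have comparable exponential size and may nearly cancel, so no single coefficient is ``pinned'' between a Liouville bound and a decaying bound. For $j$, excluding that near-cancellation effectively is exactly where Baker's theorem enters, and it applies because $j(\tau_i)=\pm e^{\pi\sqrt{d_i}}+O(1)$ involves only logarithms of algebraic numbers; for $\chi^*$ the dominant term carries the extra factor $\bigl(1-\tfrac{6}{\pi\sqrt{d_i}}\bigr)$, and the quantity whose smallness must be ruled out is no longer a linear form in logarithms of algebraic numbers. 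Your proposal never engages with this, which is the whole obstruction.

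Two further gaps. First, your claim that Corollary \ref{cor:FieldEquality} plus Proposition 5.2 of \cite{Spence2016} lets ``every field-theoretic statement about tuples of singular moduli transfer verbatim'' to $\chi^*$-values, in particular \cite[Theorem 1.2]{Allombert2015}, is false: that theorem is not a field-theoretic statement but a statement about $\mathbb{Q}$-linear relations among the specific numbers $j(\tau_1),j(\tau_2)$, and such relations are not preserved when each $j(\tau_i)$ is replaced by the different number $\chi^*(\tau_i)$, even though the generated fields coincide and Galois acts compatibly. (This is precisely why the paper, in Section \ref{sect:collinear}, must verify by hand from Masser's table even the much weaker analogues of Lemmas 5.6 and 5.7 of \cite{Bilu2017}, and why it describes the missing ABP analogue as the point ``where we finally fall flat.'') What legitimate independent conjugation for $d_1\neq d_2$ actually requires is the near-linear-disjointness of the two ring class fields, as in \cite{Bilu2013}, not ABP; so this ingredient is both wrongly justified and misattributed. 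Second, you concede that the case $d_1=d_2$ --- the case that produces the exceptional matrix $g$ and hence the constant $c_2$ --- remains ``the main remaining difficulty.'' So even granting everything else, what you have is a sketch of a fragment of the intended argument, not a proof of the conjecture.
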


Unfortunately, the techniques used by K\"uhne/Bilu-Masser-Zannier to prove this for $j$ relied heavily on effective estimates by Baker on logarithms in algebraic numbers.  The presence of the transcendental number $\pi$ in the expression $\chi^*=\chi-\frac{3}{\pi y}\xi$ (as well as in the $q$-expansions) interferes with this method and prevents us from carrying out the K\"uhne/Bilu-Masser-Zannier approach in full.  In proving \ref{thrm:EffectivePiAO} we have essentially carried out the easy half of the K\"uhne/Bilu-Masser-Zannier approach; the part which need not appeal to Baker's theorem.  Conjecture \ref{conj:effectiveChiAO}, while not strictly stronger than Theorem \ref{thrm:EffectivePiAO}, certainly seems more difficult to approach in the absence of a suitable Baker-like result.

\section{Collinear Special Points}\label{sect:collinear}
A triple of points
\[P_1=(x_1, y_1),\qquad P_2=(x_2, y_2),\qquad P_3=(x_3, y_3)\]
is collinear if and only if the determinant
\[\begin{vmatrix}1&1&1\\x_1&x_2&x_3\\y_1&y_2&y_3\end{vmatrix}\]
vanishes.

In order to approach Theorem \ref{thrm:collinearPiPoints}, we use the above to define a variety $V\suq\mathbb{C}^6$.  Sets of collinear special points will therefore correspond to special points in $V$, and by Theorem \ref{thrm:AOforPi}, $V$ will contain only finitely many such points unless it contains a positive-dimensional subvariety.

This approach is very much the same as that used by \cite{Bilu2017} to prove similar results for $j$, and indeed much of the work will be left to that paper rather than replicate the details exactly.  In the presence of the estimates from the previous section, we are able to jump straight to the proof of Theorem \ref{thrm:collinearPiPoints}, with a minimum of additional setup.  We take the following notation straight from \cite{Bilu2017}.

\begin{definition}
	A function from $\uh$ to $\mathbb{C}$ is called a $j$-map if either it takes the form
	\[j_g = j\circ g: \tau\mapsto j(g\tau)\]
	for some $g\in\gl$ or is a constant map $j_{\tau_0}$ which sends everything to some $j(\tau_0)$, with $\tau_0$ quadratic.  
	
	Similarly, a function is called a $\chi$-map if it takes the form $\chi_g=\chi^*\circ g$ or is constant and special.  Note that in this definition, for a given $j$- or $\chi$-map $j_g$ (resp. $\chi_g$), one can always without loss choose $g\in\gl$ to take the form
	\[\begin{pmatrix}a&b\\0&d\end{pmatrix}\]
	with $b<d$.  In this case the number $a/d$ is called the \emph{level} of the map and the root of unity $e^{2\pi ib/d}$ is called the \emph{twist}.  Note that a nonconstant $j$- or $\chi$-map is defined by its level and twist.  The level of a constant $j$- or $\chi$-map is defined to be 0 and the twist undefined.
	
	A pair $(F,G)$ consisting of a $j$-map $F$ and $\chi$-map $G$ is \emph{consistent} if $F$ and $G$ have the same level and twist and, if they are constant maps, they come from the same element of $\uh$.  If $(F,G)$ is a consistent pair we will often refer to its level and/or twist, in the obvious way.
\end{definition}

\begin{proof}[Proof of Theorem \ref{thrm:collinearPiPoints}]
	Suppose there were infinitely many pairwise distinct triples 
	\[P_1=(j(\tau_1),\chi^*(\tau_1)),\qquad P_2=(j(\tau_2),\chi^*(\tau_2)),\qquad P_3=(j(\tau_3),\chi^*(\tau_3))\]
	with $\tau_i$ quadratic.  Then by Theorem \ref{thrm:AOforPi}, the variety $V$ contains a positive-dimensional special-subvariety, excluding those defined by equations of the form $x_i = x_j$, $y_i = y_j$, $i\ne j$.  
	
	This would imply the existence of 3 pairwise distinct triples
	\[F_1=(j_1,\chi_1),\qquad F_2=(j_2,\chi_2),\qquad F_3=(j_3,\chi_3)\]
	 where each $j_i$ is a $j$-map, each $\chi_i$ is a $\chi$-map, each $F_i$ is consistent, at least one of the maps is nonconstant and
	 \begin{equation}\label{eqn:PiDeterminant}\begin{vmatrix}1&1&1\\j_1&j_2&j_3\\\chi_1&\chi_2&\chi_3\end{vmatrix}=0\end{equation}
	 identically.
	 
	 We'll use Lemma 7.2 from \cite{Bilu2017}, which tells us that for any three distinct $j$-maps, not all constant, by composing with an element of $\gl$, we can ensure that one of them has strictly higher level than the other two.  We can therefore without loss of generality assume that the level of $F_1$ is greater than the levels of the other two pairs.  
	 
	 Note that if $F_2$ and $F_3$ are both constant, then since they are distinct, (\ref{eqn:PiDeterminant}) induces a nontrivial relation between $j_1$ and $\chi_1$, which is impossible: $j$ and $\chi^*$ are algebraically independent.  So at most one of $F_2$ and $F_3$ can be constant.
	 
	 \bigskip\noindent
	 \textbf{Case 1: Neither $F_2$ nor $F_3$ is constant.}
	 
	 Let us write $r_i$ for the level of $F_i$ and $\eta_i$ for its twist. 
	 
	 Since, as proven in Proposition \ref{propn:weakerChiBounds}, the tails of all the relevant $q$-expansions are bounded within $\mathbb{F}$, the only way (\ref{eqn:PiDeterminant}) can hold identically is if
	 \begin{equation}\label{eqn:PiDeterminantTwo}\begin{vmatrix}1&1&1\\\eta_1q^{-r_1}&\eta_2q^{-r_2}&\eta_3q^{-r_3}\\\eta_1q^{-r_1}-\frac{3}{\pi r_1 y}\eta_1q^{-r_1}&\eta_2q^{-r_2}-\frac{3}{\pi r_2 y}\eta_2q^{-r_2}&\eta_3q^{-r_3}-\frac{3}{\pi r_3 y}\eta_3q^{-r_3}\end{vmatrix}=0,\end{equation}
	 since the above expression accounts for all of the dominant terms.  
	 
	 We know already that $r_1$ is greater than $r_2$ and $r_3$.  So if $r_2>r_3$, then as $\Imm\tau$ grows, the dominant term of (\ref{eqn:PiDeterminantTwo}) is 
	 \[\frac{3}{\pi y}\eta_1\eta_2\left(r_1^{-1}-r_2^{-1}\right)q^{-r_1-r_2}.\]
	 Since $r_1>r_2$, this term is nonzero and so the determinant above cannot vanish identically.  Contradiction.  Symmetrically we cannot have $r_3>r_2$.
	 
	 We therefore have $r_2=r_3$, in which case the dominant term of (\ref{eqn:PiDeterminantTwo}) is 
	 \[\frac{3}{\pi y}\eta_1(\eta_2-\eta_3)\left(r_1^{-1}-r_2^{-1}\right)q^{-r_1-r_2},\]
	 which can only vanish if $\eta_2=\eta_3$, since $r_1>r_2$.  Since the levels and twists of $F_2$ and $F_3$ now match, we have $F_2=F_3$, which we assumed was not the case.  Contradiction.
	 
	 \bigskip\noindent
	 \textbf{Case 2: Without loss of generality, $F_3$ is constant.}
	 
	 Write $F_3=(a,b)$.  Exactly as above, since the tails of the relevant $q$-expansions are bounded, (\ref{eqn:PiDeterminant}) can only vanish identically if 
	 \[\begin{vmatrix}1&1&1\\\eta_1q^{-r_1}&\eta_2q^{-r_2}&a\\\eta_1q^{-r_1}-\frac{3}{\pi r_1 y}\eta_1q^{-r_1}&\eta_2q^{-r_2}-\frac{3}{\pi r_2 y}\eta_2q^{-r_2}&b\end{vmatrix}=0,\]
	 since this contains all the potentially dominant terms.  In fact the dominant term here is
	 \[\frac{3}{\pi y}\eta_1\eta_2\left(r_1^{-1}-r_2^{-1}\right)q^{-r_1-r_2}\]
	 which can't vanish since $r_1>r_2$. 	 
\end{proof}

\subsection{Collinearity for $\chi^*$ Alone}
In this final part, I will briefly discuss my initial attempts to prove Conjecture \ref{conj:collinearChiPoints}.  I am convinced that the conjecture should be attainable with a relative minimum of work - no new ideas being required - but have not had the time to work this all the way through.  The approach I have in mind is exactly the same as that used in \cite{Bilu2017}; I will describe how that goes in this context and where there are remaining gaps.

Suppose we had infinitely many collinear triples
\[(\chi^*(\tau_1),\chi^*(\sigma_1)), (\chi^*(\tau_2),\chi^*(\sigma_2)),\text{ and } (\chi^*(\tau_3),\chi^*(\sigma_3)),\]
with $\tau_i$, $\sigma_j$ quadratic and the 3 pairs being distinct.  We exclude the obvious cases where the line in question is the diagonal $X=Y$ or is a horizontal or vertical line.

In light of Theorem \ref{thrm:EffectivePiAO} (which easily implies a version for $\chi^*$ alone by projection of coordinates), we get a collection of six $\chi$-maps $f_1$, $f_2$, $f_3$, $g_1$, $g_2$, $g_3$, not all constant, with the property that
\begin{equation}\label{eqn:ChiDeterminant}\begin{vmatrix}1&1&1\\f_1&f_2&f_3\\g_1&g_2&g_3\end{vmatrix}=0,\end{equation}
and moreover none of the following hold:
\begin{enumerate}
	\item $f_1=f_2=f_3$, (to exclude vertical lines)
	\item $g_1=g_2=g_3$, (to exclude horizontal lines)
	\item $f_i=f_j$ and $g_i=g_j$ for any $i\ne j$ (the pairs $(f_i, g_i)$ are distinct), nor
	\item $f_i=g_i$ for all $i$ (to exclude the diagonal).	
\end{enumerate}
Following \cite{Bilu2017}, the idea is to show that if 1, 2 and 3 all fail to hold then in fact 4 must hold, making this set-up impossible.

\bigskip
Exactly as in \cite{Bilu2017}, we can write $m_i$, $n_i$ for the level of $f_i$, $g_i$ respectively, and $\epsilon_i$, $\eta_i$ for their twists (where they are nonconstant).  Under the assumption that 1, 2 and 4 all fail to hold, the work lies in proving that $m_i=n_i$ and $\epsilon_i=\eta_i$ for all $i$.  This involves conditioning on various inequalities between the $m_i$ and $n_i$ and, for each of the various cases, perform some $q$-expansion calculations.

The calculation differs depending on which, if any, of the $\chi$-maps are constant.  As an example, we will demonstrate in the case where just $f_3$ and $g_3$ only are constant.  So (\ref{eqn:ChiDeterminant}) becomes
\[\begin{vmatrix}1&1&1\\(\epsilon_1q^{-m_1} - \dots) - \frac{3}{\pi m_1y}(\epsilon_1q^{-m_1} - \dots)&(\epsilon_2q^{-m_2} - \dots) - \frac{3}{\pi m_2y}(\epsilon_2q^{-m_2} - \dots)&a\\(\eta_1q^{-n_1} - \dots) - \frac{3}{\pi n_1y}(\eta_1q^{-n_1} - \dots)&(\eta_2q^{-n_2} - \dots) - \frac{3}{\pi n_2y}(\eta_2q^{-n_2} - \dots)&b\end{vmatrix}=0,\]
with $a$ and $b$ being $\chi^*$-special points.  

Note that by comparison of growth rates, in order for the above to hold, the $q$-expansions corresponding to the holomorphic part $\chi$ and the nonholomorphic part $\frac{3}{\pi y}\xi$ must vanish separately, that is:
\begin{equation}\label{eqn:ChiqExpMatrix}\begin{vmatrix}1&1&1\\\epsilon_1q^{-m_1} - 264 - 135602\epsilon_1q^{m_1}&\epsilon_2q^{-m_2} - 264 - 135602\epsilon_2q^{m_2}&a\\\eta_1q^{-n_1}  - 264 - 135602\eta_1q^{n_1}&\eta_2q^{-n_2}  - 264 - 135602\eta_2q^{n_1}&b\end{vmatrix}=0\end{equation}
and
\begin{equation}\label{eqn:XiqExpMatrix}\begin{vmatrix}1&1&1\\-m_1^{-1}(\epsilon_1q^{-m_1} - 240 - 8511777\epsilon_1q^{m_1})&-m_2^{-1}(\epsilon_2q^{-m_2} - 240 - 8511777\epsilon_2q^{m_2})&a\\-n_1^{-1}(\eta_1q^{-n_1} - 240 - 8511777\eta_1q^{n_1})&-n_2^{-1}(\eta_2q^{-n_2} - 240 - 8511777\eta_2q^{n_2})&b\end{vmatrix}=0.\end{equation}
Here we're calculating the first few terms of the $q$-expansions of $\chi$ and $\xi$ using the known $q$-expansions of the Eisenstein series $E_2$, $E_4$ and $E_6$ from Section \ref{sect:intro}.

\bigskip

Using just (\ref{eqn:ChiqExpMatrix}) - and equivalents for the cases where other $\chi$-maps are constant - we can replicate much of the calculation from \cite{Bilu2017}.  The primary relevant sections of \cite{Bilu2017} are Sections 8 onwards, where the case-analysis is carried out.  In \cite{Bilu2017}, the analogy of (\ref{eqn:ChiqExpMatrix}) simply has the leading terms of the $q$-expansion of $j$ ($q^{-1} + 744 + 196884 q$) rather than those for $\chi$.  There is no analogy for equation (\ref{eqn:XiqExpMatrix}), since $j$ has no nonholomorphic part, so for now we ignore (\ref{eqn:XiqExpMatrix})

In light of equation (\ref{eqn:ChiqExpMatrix}), then, much of the calculation in \cite[Sections 8 onwards]{Bilu2017}, goes through without a hitch, replacing occurrences of the numbers 744 and 196884 with -264 and -135602 respectively.  On a number of occasions, however, we need to appeal to appropriate analogues lemmas from earlier in \cite{Bilu2017}.

\bigskip

The lemmas from \cite{Bilu2017} in question are: 4.1, 4.2, 5.1 to 5.9, and 7.3.  

Lemmas 4.1 and 4.2 just concern roots of unity, so still hold here.  Lemma 5.1 is just the $j$-analogue of this paper's Lemma \ref{lma:QuadraticSizeComparison} and using the estimates from Section \ref{sect:qExp}, a suitable analogue of Lemma 5.3 can be attained.  An analogue of Lemma 5.4 is easy in light of this paper's Corollary \ref{cor:FieldEquality}.

Lemmas 5.6 and 5.7 claim that certain numbers like $744\pm 196884$ and $744\pm 196884\theta$ (for $\theta$ a root of unity) are not singular moduli.  The equivalent statements for $\chi^*$-special points (with 744 and 196884 replaced by -264 and -135602) are easily proven in light of the following table of integral $\chi^*$-special points, together with Corollary \ref{cor:FieldEquality}, which implies among other things that these are the only integral $\chi^*$-special points.
\begin{center}
	\begin{tabular}{c|c c c c c c c c c}
		Discriminant & -3 & -4 & -7 & -8 & -11 & -12 & -16 & -19 & -27 \\
		$\chi^*$ & 0 & 0 & -1215 & 2240 & -14336 & 23760 & 149688 & -497664 & -7772160 \\
		\hline
	\end{tabular}
	\begin{tabular}{c|c c c c}
		Discriminant &-28 &-43 &-67&-163\\
		$\chi^*$ &10596015&-627056640 &-112852776960 &-223263987730882560
	\end{tabular}	
\end{center}
This table is derived easily from Masser's list of special points calculated in \cite{Masser1975}.

Lemmas 5.8 and 5.9 gives restrictions for when a singular modulus can be an integral linear combination of certain roots of unity.  A suitable analogue for $\chi^*$-special points can be attained in light of the estimates from Section \ref{sect:qExp}, using the same strategies as employed in \cite{Bilu2017} to get Lemmas 5.8 and 5.9.

A $\chi^*$-analogue of Lemma 7.3, which gives restrictions on when two $j$-maps $f$ and $g$ can satisfy $af+bg+c=0$, is very easy in this setting.  If any non-obvious such linear relation exists among $\chi$-maps, then we can combine it with the known algebraic relations (modular polynomials) between $j$- and $\chi$-maps, counting conditions to show that some consistent $(j,\chi)$-map pair $(f,g)$ is a solution to some bivariate polynomial, which is impossible since $j$ and $\chi^*$ are algebraically independent.

\bigskip

So the gap to be filled in consists of finding analogues of Lemmas 5.2 and 5.5 of \cite{Bilu2017}.  In turn these rely on getting an analogue of Theorem 1.2 of \cite{Allombert2015}, which is where we finally fall flat.  This is a rather significant gap in the approach.  The theorem in question says that pairs of singular moduli can only be linearly independent over $\mathbb{Q}$ if they have degree at most 2.  This is the culmination of a significant amount of work from \cite{Allombert2015}, and the author has not yet had the time to work through whether a suitable analogue holds for $\chi^*$.  So here we must stop.

Before leaving this entirely, however, we will make the obvious comment; we earlier decided to ignore the presence of (\ref{eqn:XiqExpMatrix}), which does not appear in \cite{Bilu2017}.  In doing this, of course, we potentially throw away a significant amount of valuable information which could be used to our benefit.  A viable approach to bridge the gap in proving Conjecture \ref{conj:collinearChiPoints}, therefore, might be to make use of equation (\ref{eqn:XiqExpMatrix}) to circumvent the necessity of appeals to the missing Lemmas 5.2 and 5.5.  Once again, though, the author has not been able to work this through properly, so I will leave it here, in the hope that I have achieved my goal of laying out a viable approach to proving Conjecture \ref{conj:collinearChiPoints} and describing what remains to be done.

%If $\tau$ is a quadratic number, and $\tau'$ is another, of the same discriminant there is a Galois automorphism (over $\mathbb{Q}$) which sends $j(\tau)$ to $j(\tau')$.  In particular, if $\tau$ is quadratic with discriminant $-D$, there is a Galois automorphism $\theta$ over $\mathbb{Q}$ which sends $j(\tau)$ to $j\left(\frac{D+\sqrt{-D}}{2}\right)$.  By REF, Lemma REF, it follows that $\theta(\chi^*(\tau))=\chi^*\left(\frac{D+\sqrt{-D}}{2}\right)$ also.
%
%So if $p(j(\tau),\chi^*(\tau))$ vanishes, then so does $p\left(j\left(\frac{D+\sqrt{-D}}{2}\right),\chi^*\left(\frac{D+\sqrt{-D}}{2}\right)\right)$, which together with the bounds on $\Imm\tau$ above yields 
%\[\sqrt{D}\leq 2\max\left(2, \frac{6\operatorname{H}(p)c_1}{\pi}, (1-2\pi)^{-1}\log\left(\frac{2\pi^kk!\operatorname{H}(p)c_2}{3^k}\right) \right),\]
%as desired.
%
%
%If instead of over $\mathbb{Q}$, $p$ is defined over some number field $K$, the argument essentially does not change.  Suppose that $\tau$ has discriminant $-D$ and that $p(j(\tau),\chi^(\tau))$ vanishes.  Let $\theta$ be a Galois automorphism (over $\mathbb{Q}$) sending $j(\tau)$ to $j(\tau')$.  Then $\theta(p)(j(\tau'),\chi^*(\tau'))$ also vanishes.  Since there is always such a $\theta$ sending $j(\tau)$ to $j\left(\frac{D+\sqrt{-D}}{2}\right)$, we can consider $\theta(p)$ instead of $p$ and hence assume that $\tau=\frac{D+\sqrt{-D}}{2}$.

\bibliographystyle{scabbrv}
\bibliography{thebib}

\end{document}